\newtheorem{thm}{Theorem}[section]
\newtheorem{cor}[thm]{Corollary}
\newtheorem{lem}[thm]{Lemma}
\newtheorem{prob}[thm]{Problem}
\theoremstyle{remark}
\newtheorem*{rem}{Remark}
\numberwithin{equation}{section}
\newcommand{\A}{\mathbb{A}}
\newcommand{\GL}{\mathrm{GL}}
\newcommand{\ZZ}{\mathbb{Z}}
\newcommand{\QQ}{\mathbb{Q}}
\newcommand{\lto}{\longrightarrow}
\newcommand{\CC}{\mathbb{C}}
\newcommand{\pp}{\mathfrak{p}}
\newcommand{\oo}{\mathfrak{o}}
\newcommand{\Sym}{\mathrm{Sym}}
\newcommand{\quash}[1]{}
\theoremstyle{definition}
\newtheorem{defn}[thm]{Definition}
\renewcommand{\bar}{\overline}
\newcommand{\one}{\mathbbm{1}}
\newcounter{remarkscounter}
\begin{document}

\title[Combinatorial approach to explicit Satake inversion]{From partition identities to a combinatorial approach to explicit Satake inversion}

\author{Heekyoung Hahn}
\author{JiSun Huh}
\author{EunSung Lim}
\author{Jaebum Sohn}

\address{Department of Mathematics, Duke University, Durham, NC 27708, USA}
\email{hahn@math.duke.edu}
\address{Department of Mathematics, Ajou University, Suwon 16499, Republic of Korea}
\email{hyunyjia@yonsei.ac.kr}
\address{Department of Mathematics, Yonsei University, Seoul 03722 Republic of Korea}
\email{jyl1585@yonsei.ac.kr}
\address{Department of Mathematics, Yonsei University, Seoul 03722 Republic of Korea}
\email{jsohn@yonsei.ac.kr}

\subjclass[2010]{Primary 11P84, 11S40; Secondary 05E05, 05E10}


\begin{abstract}
In this paper, we provide combinatorial proofs for certain partition identities which arise naturally in the context of Langlands' beyond endoscopy proposal. These partition identities motivate an explicit plethysm expansion of $\Sym^j(\Sym^kV)$ for $\GL_2$ in the case $k=3$. We compute the plethysm explicitly for the cases $k=3, 4$. Moreover, we use these expansions to explicitly compute the basic function attached to the symmetric power $L$-function of $\GL_2$ for these two cases.
\end{abstract}

\maketitle


\section{Introduction}\label{intro}

This paper is an application of partition theory and combinatorics to explicit Satake inversion.  We begin by discussing our combinatorial results which are directly related to the multiplicity of the plethysm expansion, and then at the end of the introduction talk more about what we mean by explicit Satake inversion. 

The first named author arrived at some concrete partition identities \cite[Corollary 3.3]{Hahn-PAMS} in her work on detection of subgroups of $\GL_n$ by representations. They arise naturally in the context of Langlands' beyond endoscopy proposal (see \cite{Hahn-PAMS} and \cite{Hahn-RNUT} for some related discussions).  We begin this paper by giving combinatorial proofs of these partition identities involving $p(j, k, n)$. Let $p(j, k, n)$ be  the number of partitions of $n$ into at most $k$ parts, having largest part at most $j$. One has the following:

\begin{thm}[Corollary 3.3 in \cite{Hahn-PAMS}] \label{thm1-intro} 
For any integer $\ell \geq 1$, one has that
\begin{align}
p(4\ell -2, 3, 6\ell -3)&-p(4\ell -2, 3, 6\ell -4)=0,\\
p(4\ell, 3, 6\ell)&-p(4\ell, 3, 6\ell-1)=1. 
\end{align}
\end{thm}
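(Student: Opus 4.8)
The plan is to read $p(j,3,n)$ off the Gaussian binomial coefficient and then to exploit the fact that in both identities the target index $n$ lies exactly at the centre of symmetry of that polynomial, which is what forces the difference to collapse.

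Since $p(j,3,n)$ counts the partitions of $n$ with at most three parts, each at most $j$, we have
\[
\sum_{n\ge 0} p(j,3,n)\,q^n \;=\; \binom{j+3}{3}_q \;=\; \frac{(1-q^{j+1})(1-q^{j+2})(1-q^{j+3})}{(1-q)(1-q^2)(1-q^3)},
\]
so $p(j,3,n)-p(j,3,n-1)$ is the coefficient of $q^n$ in
\[
(1-q)\binom{j+3}{3}_q \;=\; \frac{(1-q^{j+1})(1-q^{j+2})(1-q^{j+3})}{(1-q^2)(1-q^3)}.
\]
I would expand the numerator as $1-q^{j+1}-q^{j+2}-q^{j+3}+E(q)$, where $E(q)$ is supported in degrees $\ge 2j+3$. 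The key observation is that in both identities $2n=3j$ — indeed, for $\ell\ge 1$ one has $6\ell-3=\tfrac32(4\ell-2)$ and $6\ell=\tfrac32(4\ell)$ — so $n<2j+3$ and $E(q)$ contributes nothing to $[q^n]$. Writing $g(m)$ for the number of partitions of $m$ into parts each equal to $2$ or $3$ (i.e.\ $\sum_{m\ge 0} g(m)q^m=\frac1{(1-q^2)(1-q^3)}$), this yields
\[
p(j,3,n)-p(j,3,n-1)\;=\;g(n)-g(n-j-1)-g(n-j-2)-g(n-j-3).
\]

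Next I would use that $n-j-1,\,n-j-2,\,n-j-3$ are three consecutive integers, together with the elementary identity $(1+q+q^2)\cdot\frac1{(1-q^2)(1-q^3)}=\frac1{(1-q)(1-q^2)}$; comparing coefficients gives $g(m)+g(m-1)+g(m-2)=\lfloor m/2\rfloor+1$ for all $m\ge 0$, the number of partitions of $m$ into parts $\le 2$. Hence the displayed difference equals $g(n)-\big(\lfloor(n-j-1)/2\rfloor+1\big)$, and it only remains to compute $g(n)$, which is the number of $b\ge 0$ with $3b\le n$ and $b$ of the same parity as $n$. For $j=4\ell-2$, $n=6\ell-3$: here $n$ is odd, $b$ runs over the odd integers in $[1,2\ell-1]$, so $g(n)=\ell$, while $\lfloor(n-j-1)/2\rfloor+1=\lfloor(2\ell-2)/2\rfloor+1=\ell$, and the difference is $0$. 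For $j=4\ell$, $n=6\ell$: here $n$ is even, $b$ runs over the even integers in $[0,2\ell]$, so $g(n)=\ell+1$, while $\lfloor(n-j-1)/2\rfloor+1=\lfloor(2\ell-1)/2\rfloor+1=\ell$, and the difference is $1$. This proves both equalities.

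The one substantive point in the argument is the observation $2n=3j$, i.e.\ that $n$ is the centre of the palindromic polynomial $\binom{j+3}{3}_q$; this is precisely what makes the tail $E(q)$ disappear and reduces the whole computation to the two one-line counts above, so I expect no real obstacle beyond keeping the truncation and the shifted arguments straight. Each partition count appearing here has an obvious recursive description, so the whole argument can also be recast purely combinatorially, in the spirit of the rest of the paper, without any generating-function manipulation.
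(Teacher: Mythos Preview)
Your argument is correct. The generating-function manipulation is sound: the tail $E(q)$ genuinely vanishes because $n=\tfrac{3j}{2}<2j+3$, the identity $g(m)+g(m-1)+g(m-2)=\lfloor m/2\rfloor+1$ holds for all $m\ge 0$ under the natural convention $g(m)=0$ for $m<0$ (so the $\ell=1$ edge cases are fine), and the final evaluations of $g(6\ell-3)=\ell$ and $g(6\ell)=\ell+1$ are right.

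However, your route is \emph{not} the one the paper takes. The paper proves both identities by building explicit bijections between the relevant partition sets: it maps $(\lambda_1,\lambda_2,\lambda_3)$ with $\lambda_3>0$ to $(\lambda_1,\lambda_2,\lambda_3-1)$, and then handles the leftover two-part partitions $(\lambda_1,\lambda_2,0)$ by a case split on the parity of $\lambda_1$, sending them to partitions of the form $(\ast,\mu,\mu)$; in the second identity exactly one partition, $(4\ell,2\ell,0)$, has no partner, which accounts for the right-hand side being $1$ rather than $0$. Your approach trades this hands-on construction for a short computation with $\frac{1}{(1-q^2)(1-q^3)}$, and in fact what you have written essentially proves the paper's later Theorem~\ref{thm3-intro} directly (your $g(n)$ coincides with $\lfloor n/2\rfloor-\lfloor(n-1)/3\rfloor$), from which the paper itself remarks these identities can be recovered. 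So you gain brevity and a uniform formula; what the paper's bijective proof buys is a concrete, generating-function-free matching that motivates the combinatorial method used throughout \S\ref{Explicit}.
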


\noindent We also derive more identities similar to those in Theorem \ref{thm1-intro} using the same combinatorial argument. They are presented in the following theorem:

\begin{thm} \label{thm2-intro} 
For any integer $\ell\geq 1$, one has that
\begin{align}
p(4\ell-1,3,6\ell-3)&-p(4\ell-1,3,6\ell-4)=1, \\
p(4\ell+1,3,6\ell)&-p(4\ell+1,3,6\ell-1)=1. 
\end{align}
\end{thm}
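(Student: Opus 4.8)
The plan is to re-run, for the parameter values $j = 4\ell-1$ and $j = 4\ell+1$, exactly the combinatorial reduction that underlies Theorem \ref{thm1-intro}. Represent a partition counted by $p(j,3,n)$ as a triple $(a,b,c)$ with $j \ge a \ge b \ge c \ge 0$ and $a+b+c = n$. The key map is $(a,b,c) \mapsto (a+1,b,c)$, a well-defined injection from the triples counted by $p(j,3,n-1)$ with $a \le j-1$ into the triples counted by $p(j,3,n)$, whose image is precisely the triples with $a > b$. Splitting the $p(j,3,n)$-triples according to whether $a > b$ or $a = b$, and the $p(j,3,n-1)$-triples according to whether $a \le j-1$ or $a = j$, this injection yields
\begin{equation*}
p(j,3,n) - p(j,3,n-1) \;=\; A(j,n) - B(j,n),
\end{equation*}
where $A(j,n)$ counts partitions of $n$ of the shape $(m,m,c)$ with $j \ge m \ge c \ge 0$, and $B(j,n)$ counts partitions of $n-1$ with largest part exactly $j$ --- equivalently, partitions of $r := n-1-j$ into at most two parts, each at most $j$ (so $B(j,n) = 0$ if $r < 0$).

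The next step is to evaluate $A$ and $B$ on the two families. A triple $(m,m,c)$ of $n$ is determined by $m$, which runs over the integers with $\lceil n/3\rceil \le m \le \min(\lfloor n/2\rfloor, j)$, while a partition of $r \ge 0$ into at most two parts each $\le j$ is determined by its largest part, which runs over $\lceil r/2\rceil \le a \le \min(r,j)$. For both $(j,n) = (4\ell-1, 6\ell-3)$ and $(j,n) = (4\ell+1, 6\ell)$ one checks that the box bounds $m \le j$ and $a \le j$ are slack, so that $A(j,n) = \lfloor n/2\rfloor - \lceil n/3\rceil + 1$ and $B(j,n) = \max\{0,\ r - \lceil r/2\rceil + 1\}$. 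Substituting: for the first family $A = (3\ell-2) - (2\ell-1) + 1 = \ell$ while $r = 2\ell-3$ gives $B = \ell-1$ (read as $0$ when $\ell = 1$), so $A - B = 1$; for the second family $A = 3\ell - 2\ell + 1 = \ell+1$ while $r = 2\ell-2$ gives $B = \ell$, so again $A - B = 1$. This establishes both identities.

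I expect the only delicate point --- hence the main obstacle --- to be the bookkeeping in the second step: one must confirm that in each family the upper box bounds are genuinely not binding (so that the clean floor/ceiling formulas for $A$ and $B$ apply), and one must separately dispose of the degenerate sub-case $r = n-1-j < 0$, which occurs only at $\ell = 1$ in the first family and forces $B = 0$. Everything else is a verbatim transcription of the argument proving Theorem \ref{thm1-intro} with the new data inserted; in particular, the reduction to $A(j,n) - B(j,n)$ is already available there in the form needed here.
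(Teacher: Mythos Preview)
Your argument is correct, but it is not the route the paper takes, and it is also not quite a ``re-run'' of the paper's proof of Theorem~\ref{thm1-intro}.

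The paper deduces Theorem~\ref{thm2-intro} indirectly: it first proves four auxiliary identities (Lemma~\ref{lem1}) of the shape $p(4\ell,3,6\ell)-p(4\ell-1,3,6\ell-3)=\ell+1$, obtained by stripping a column from the Ferrers diagram (Lemma~\ref{lem0}), and then combines pairs of these with Theorem~\ref{thm1-intro}. Your approach is direct: the injection $(a,b,c)\mapsto(a+1,b,c)$ on the \emph{largest} part yields $p(j,3,n)-p(j,3,n-1)=A(j,n)-B(j,n)$, and you then evaluate $A$ and $B$ by explicit counting. The verifications that the box constraints $m\le j$ and $a\le j$ are slack, and that the degenerate case $r<0$ at $\ell=1$ still gives $A-B=1$, are all correct.

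Two remarks. First, your reduction is not ``already available'' in the paper's proof of Theorem~\ref{thm1-intro}: that proof uses a three-case bijection acting on the \emph{smallest} part $\lambda_3$ and builds a full matching (up to a single exceptional partition), rather than isolating two residual sets $A,B$ to be counted separately. Your setup is in fact the ``dual'' of the reduction the paper uses later for Theorem~\ref{thm3-intro}, where the map $(\lambda_1,\lambda_2,\lambda_3)\mapsto(\lambda_1,\lambda_2,\lambda_3-1)$ plays the analogous role. Second, what your approach buys is uniformity and independence: both identities fall out of one calculation, and you never invoke Theorem~\ref{thm1-intro}. What the paper's approach buys is a cleaner bijective picture (Theorem~\ref{thm1-intro} is proved by an explicit matching, not by subtracting two counts) and a visible link between the two theorems via Lemma~\ref{lem1}.
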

\noindent In fact, we have derived more such identities in Lemma \ref{lem1}, but in the introduction we record only the simplest.

As mentioned, this project was originally motivated by representation theory, and we will discuss more connections to representation theory in a moment.  
First we pause to mention that G.~Andrews has pointed out to us that the identities above are ``just the beginning of interesting research on special properties of $p(j,k,n)$'' (private communication).  Partition theory has been studied intensively for a long time, so we are unable to give a complete bibliography of relevant works.  We will, however, mention the work of Kronholm and Larson \cite{Brandt} on $p(n, m)$, the number of partitions of $n$ into exactly $m$ parts, since one might try to similarly analyze $p(j, k, n)$.

The identities in theorems \ref{thm1-intro} and \ref{thm2-intro} are simple cases of more general ones that have an interpretation as multiplicities arising in plethysm expansion for $\GL_2$.  To explain them, we recall that if $V$ is a two-dimensional vector space, viewed as the standard representation of $\GL_2$, then one has an isomorphism as $\GL_2$-representations
\begin{equation}\label{Pleth-eqn}
\Sym^j(\Sym^kV)\cong \bigoplus_{n=0}^{\lfloor jk/2\rfloor} (\Sym^{jk-2n}V\otimes {\det}^{jk-n})^{\oplus N(j, k, n)}
\end{equation}
(see \cite[Theorem 5.5]{Do} for example). Here, as usual, for any real $x$, $\lfloor x\rfloor$ denotes the floor function of $x$ defined by $\lfloor x\rfloor=\max\{m\in \ZZ\,:\, m\leq x\}$. The explicit formula for the multiplicities $N(j, k, n)$ plays an important role for the explicit Satake inversion. This will be explained in great detail in \S \ref{application}.

It turns out that the identities in Theorem \ref{thm1-intro} and Theorem \ref{thm2-intro} are intimately related to the multiplicity $N(j, 3, n)$ appearing in \eqref{Pleth-eqn} above. In general, one has that
\begin{equation}\label{N-p}
N(j, k, 0)=1, \quad N(j, k, n)=p(j, k, n)-p(j, k, n-1), \quad n\geq 1.
\end{equation}
Here, by convention $N(j, k, n)=1$ and $p(j, k, n)=1$ whenever one of the values of $j$, $k$, $n$ is zero. 

Using an analogue of the combinatorial argument for deriving the identities in the theorems above, we computed an explicit formula for $N(j, 3, n)$ for all $0\leq n \leq \lfloor \frac{3j}{2}\rfloor$.

\begin{thm} \label{thm3-intro}
Let $j$ and $n$ be non-negative integers such that $0\leq n\leq \lfloor \frac{3j}{2} \rfloor$.  For $0\leq n\leq j$, one has that
\begin{equation}\label{N3a-intro}
N(j,3,n)=\Bigl\lfloor \frac{n}{2}\Bigr\rfloor - \Bigl\lfloor \frac{n-1}{3}\Bigr\rfloor.
\end{equation}
Likewise, for $j<n \leq \lfloor \frac{3j}{2}\rfloor$, we have that
\begin{equation}\label{N3b-intro}
N(j, 3, n)= \Bigl\lfloor \frac{n}{2}\Bigr\rfloor-\Bigl\lfloor \frac{n-1}{3}\Bigr\rfloor-\Bigl\lfloor \frac{n-j-1}{2} \Bigr\rfloor-1 .
\end{equation}                             
\end{thm}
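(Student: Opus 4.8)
The plan is to reduce Theorem~\ref{thm3-intro}, via the relation \eqref{N-p}, to a lattice-point count and then to run an explicit near-bijection of the kind used for Theorems~\ref{thm1-intro} and~\ref{thm2-intro}. We may assume $n\ge 1$, the case $n=0$ being the stated convention. Passing to conjugate partitions identifies a partition of $m$ into at most three parts of size at most $j$ with the triple $(x_1,x_2,x_3)\in\ZZ_{\ge 0}^3$ recording how many parts equal $1$, $2$, $3$; hence $p(j,3,m)=\#S_m$ where
\[
S_m:=\{(x_1,x_2,x_3)\in\ZZ_{\ge0}^3\ :\ x_1+2x_2+3x_3=m,\ \ x_1+x_2+x_3\le j\},
\]
and $N(j,3,n)=\#S_n-\#S_{n-1}$. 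Alongside this I would record the elementary identity $r(m)=\lfloor m/2\rfloor-\lfloor(m-1)/3\rfloor$ valid for $m\ge 0$, where $r(m):=\#\{(y,z)\in\ZZ_{\ge0}^2:2y+3z=m\}$ (and $r(m)=0$ for $m<0$); this is a one-line verification according to the residue of $m$ modulo $6$.

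The combinatorial core is the map $g(x_1,x_2,x_3)=(x_1+1,x_2,x_3)$, which I claim restricts to a bijection from $\{(x_1,x_2,x_3)\in S_{n-1}:x_1+x_2+x_3<j\}$ onto $\{(x_1,x_2,x_3)\in S_n:x_1\ge1\}$; the inverse subtracts $1$ from the first coordinate, and in both directions the coordinate-sum constraint is immediate. Splitting $S_n$ according to whether $x_1=0$ and $S_{n-1}$ according to whether $x_1+x_2+x_3=j$ then gives
\[
N(j,3,n)=\#\{(y,z)\in\ZZ_{\ge0}^2:2y+3z=n,\ y+z\le j\}\;-\;\#\{(y,z)\in\ZZ_{\ge0}^2:y+2z=n-1-j,\ y+z\le j\},
\]
where the subtracted set parametrizes $S_{n-1}\cap\{x_1+x_2+x_3=j\}$ after eliminating $x_1=j-x_2-x_3$.

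Finally I would carry out the range analysis for $0\le n\le\lfloor 3j/2\rfloor$. Any solution of $2y+3z=n$ has $y+z=(n-z)/2\le n/2<j$, and any solution of $y+2z=n-1-j$ has $y+z=(n-1-j)-z\le n-1-j<j$ throughout this range (using $\lfloor 3j/2\rfloor<2j$ for $j\ge1$), so both ``$y+z\le j$'' constraints are vacuous here. For $0\le n\le j$ the subtracted set is empty (its defining total $n-1-j$ is negative) and the first set has cardinality $r(n)$, yielding \eqref{N3a-intro}. For $j<n\le\lfloor 3j/2\rfloor$ the first count is still $r(n)$ while the second is $\#\{(y,z)\ge0:y+2z=n-1-j\}=\lfloor(n-1-j)/2\rfloor+1$, yielding \eqref{N3b-intro}.

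I expect the main (if mild) obstacle to be exactly this last step: the bookkeeping for the image and complement of $g$ at the two boundaries $x_1=0$ and $x_1+x_2+x_3=j$, together with confirming that $n/2<j$ and $n-1-j<j$ genuinely hold on $0\le n\le\lfloor 3j/2\rfloor$ for every $j\ge1$ — this is what collapses the answer to the clean floor expressions and, simultaneously, what forces the two-case split at $n=j$. As a cross-check, \eqref{N-p} also gives $\sum_n N(j,3,n)q^n=(1-q)\binom{j+3}{3}_q=\tfrac{(1-q^{j+1})(1-q^{j+2})(1-q^{j+3})}{(1-q^2)(1-q^3)}$, whose coefficient of $q^n$ is $r(n)-r(n-j-1)-r(n-j-2)-r(n-j-3)$; this agrees with the formula above once one knows $r(m)+r(m-1)+r(m-2)=\lfloor m/2\rfloor+1$ for $m\ge0$.
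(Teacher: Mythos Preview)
Your argument is correct. The overall strategy coincides with the paper's: write $N(j,3,n)=p(j,3,n)-p(j,3,n-1)$, exhibit a near-bijection between the two partition sets, and count the defect on each side. The implementation, however, is a genuine dual of the paper's.

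The paper works directly with $\lambda=(\lambda_1,\lambda_2,\lambda_3)$ and uses the map $\lambda\mapsto(\lambda_1,\lambda_2,\lambda_3-1)$, a bijection from $\{\lambda_3\ge1\}$ onto $\{\lambda_2>\lambda_3\}$; the leftover sets to be counted are $\{\lambda\vdash n:\lambda_3=0\}$ and $\{\lambda\vdash n-1:\lambda_2=\lambda_3\}$, of sizes $\lfloor n/2\rfloor+1$ and $\lfloor(n-1)/3\rfloor+1$ when $n\le j$, giving \eqref{N3a-intro} immediately. Your map $g$, translated back through conjugation, is $\lambda\mapsto(\lambda_1+1,\lambda_2,\lambda_3)$, and your leftover sets are $\{\lambda_1=\lambda_2\}$ (your $x_1=0$) and $\{\lambda_1=j\}$ (your $x_1+x_2+x_3=j$). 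So the two proofs are ``subtract from the smallest part'' versus ``add to the largest part''.

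What your route buys: the correction for $j<n\le\lfloor 3j/2\rfloor$ appears transparently as the single count $\#\{y+2z=n-1-j\}=\lfloor(n-j-1)/2\rfloor+1$, with no separate case analysis of the second boundary set, and the generating-function cross-check at the end is a nice addition not in the paper. What the paper's route buys: its two boundary counts are $\lfloor n/2\rfloor+1$ and $\lfloor(n-1)/3\rfloor+1$ by inspection, so it does not need your auxiliary identity $r(m)=\lfloor m/2\rfloor-\lfloor(m-1)/3\rfloor$ (which, as you note, is a residue-mod-$6$ check). Both arguments are of comparable length and difficulty.
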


\begin{rem}We note that one can derive Theorems \ref{thm1-intro} and \ref{thm2-intro} from Theorem \ref{thm3-intro} with a little effort (see Corollary \ref{cor-thm3}), but we have decided to prove Theorems \ref{thm1-intro} and \ref{thm2-intro} separately because the proofs are more concrete and slightly different from that of Theorem \ref{thm3-intro}. In fact, they motivate the proof of Theorem \ref{thm3-intro}. Hence the readers might be easy to read the proof.
\end{rem}

Similarly, we have obtained the formula for $N(j, 4, n)$ written recursively in terms of the values of $N(j, 3, s)$, $s\leq n$. This is Theorem \ref{thm4}, but we will not record it in the introduction. In principle, the same method should allow one to derive the recursive expression for $N(j, k, n)$ in terms of the values of $N(j, k-1, s)$ for some $s\leq n$. It would be interesting to see the explicit recursive expression for $N(j, k, n)$ for arbitrary $k$.

As an application of Theorem \ref{thm3-intro} and Theorem \ref{thm4}, we derive an explicit formula for the basic function attached to the symmetric $k$th power $L$-function for $\GL_2$, where $k=3$ and $k=4$, respectively. Let
\begin{equation}\label{LSymk-intro}
\mathbb{L}(\Sym^k)=\sum_{j=0}^{\infty}\mathcal{S}^{-1}\big(\Sym^j(\Sym^k(t_1,t_2))\big),
\end{equation}
where $\mathcal{S}^{-1}$ is the inverse Satake transform (see \eqref{Satake}). This is the basic function attached to the symmetric $k$th power $L$-function for $\GL_2$. See \S \ref{application} for more details.  We note that recently there has  been a great deal of interest in obtaining explicit expressions for basic function in greater generality
(see \cite{Yiannis}, \cite{Li}, \cite{Bill-paper} \cite{Bill-slides}).

The next theorem gives a very explicit description of the basic function in our case of interest:

\begin{thm}\label{thm-LSym3-intro}
For any fixed $j$,  let $T_j$ be the sum of the Hecke operators given in
\begin{align*}
T_j:=&\sum_{n=0}^{\lfloor 3j/2\rfloor}\Big(\Bigl\lfloor \frac{n}{2}\Bigr\rfloor-\Bigl\lfloor\frac{n-1}{3}\Bigr\rfloor\Big)q^{n}\one_{3j-2n}*\one_{3j-n, 3j-n}\\
&-\sum_{n=j+1}^{\lfloor 3j/2\rfloor}\Big(\Bigr
\lfloor \frac{n-j-1}{2} \Bigr\rfloor+1\Big)q^{n}\one_{3j-2n}*\one_{3j-n, 3j-n}.
\end{align*}
Then one has that
\begin{align*}
\mathbb{L}(\Sym^3)=\sum_{j=0}^{\infty}T_jq^{-3j/2}.
\end{align*}
\end{thm}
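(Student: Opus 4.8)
The plan is to obtain the formula for $\mathbb{L}(\Sym^3)$ by feeding the explicit multiplicity formula of Theorem \ref{thm3-intro} into the Satake-inversion machinery set up in \S\ref{application}; once the normalizations are fixed, the argument is a careful bookkeeping of powers of $q$. First I would unwind the definition \eqref{LSymk-intro}: since $\mathcal{S}^{-1}$ is linear, it suffices to compute $\mathcal{S}^{-1}\bigl(\Sym^j(\Sym^3(t_1,t_2))\bigr)$ for each fixed $j$ and then sum over $j\ge 0$. The $k=3$ instance of the plethysm \eqref{Pleth-eqn} writes the character $\Sym^j(\Sym^3(t_1,t_2))$ as $\sum_{n=0}^{\lfloor 3j/2\rfloor}N(j,3,n)\,\chi_n$, where $\chi_n$ denotes the character of $\Sym^{3j-2n}V\otimes\det^{3j-n}$ and the range of $n$ is precisely the one in which Theorem \ref{thm3-intro} supplies $N(j,3,n)$.

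Next I would apply the inverse Satake transform term by term, using the formula recorded in \S\ref{application} for $\mathcal{S}^{-1}$ of the character of $\Sym^m V\otimes\det^r$: the determinant twist $\det^r$ becomes convolution by the central characteristic function $\one_{r,r}$, and the resulting operator is $q^{-m/2}\,\one_m*\one_{r,r}$. With $m=3j-2n$ and $r=3j-n$ this gives
\[
\mathcal{S}^{-1}\bigl(\Sym^j(\Sym^3(t_1,t_2))\bigr)=\sum_{n=0}^{\lfloor 3j/2\rfloor}N(j,3,n)\,q^{-(3j-2n)/2}\,\one_{3j-2n}*\one_{3j-n,3j-n}.
\]
Rewriting $q^{-(3j-2n)/2}=q^{-3j/2}\,q^{n}$ pulls out exactly the common factor $q^{-3j/2}$ predicted by the statement and leaves $\sum_{n}N(j,3,n)\,q^{n}\,\one_{3j-2n}*\one_{3j-n,3j-n}$ inside.

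It then remains to substitute Theorem \ref{thm3-intro} and recognize the result as $T_j$. There, $N(j,3,n)$ equals $\lfloor n/2\rfloor-\lfloor(n-1)/3\rfloor$ for every $n$ with $0\le n\le\lfloor 3j/2\rfloor$, with the additional subtraction of $\lfloor(n-j-1)/2\rfloor+1$ present only once $n$ exceeds $j$. Splitting the single sum $\sum_{n=0}^{\lfloor 3j/2\rfloor}$ along this case division turns it into the difference of the two sums defining $T_j$: the first, with coefficient $\lfloor n/2\rfloor-\lfloor(n-1)/3\rfloor$, runs over $0\le n\le\lfloor 3j/2\rfloor$, while the correction, with coefficient $\lfloor(n-j-1)/2\rfloor+1$, runs only over $j<n\le\lfloor 3j/2\rfloor$ (and is vacuous for $j\le 1$). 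Summing $q^{-3j/2}T_j$ over $j\ge 0$ then recovers $\mathbb{L}(\Sym^3)$, the identity being understood in the completion of the spherical Hecke algebra in which the basic function of \eqref{LSymk-intro} is defined.

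I expect the genuinely delicate point to be the inverse Satake computation invoked in the second paragraph: one must check that the twist $\det^{3j-n}$ corresponds on the nose to convolution with $\one_{3j-n,3j-n}$ and that the accompanying power of $q$ is exactly $q^{-(3j-2n)/2}$, since it is precisely this normalization that converts the exponent into the clean split $q^{-3j/2}\cdot q^{n}$ demanded by the theorem. Granting the conventions of \S\ref{application}, the remaining steps are routine regroupings of a finite sum, so the only substantive new input is the closed formula of Theorem \ref{thm3-intro}.
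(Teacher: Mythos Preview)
Your proposal is correct and follows essentially the same route as the paper's own proof: apply the plethysm decomposition \eqref{Pleth-eqn}, invert each summand via Lemma~\ref{key} to produce the factor $q^{-(3j-2n)/2}=q^{-3j/2}q^{n}$, and then substitute the two-case formula of Theorem~\ref{thm3-intro}, regrouping the result as the difference of sums defining $T_j$. The point you flag as delicate---the normalization $\mathcal{S}^{-1}(\Sym^m\otimes\det^r)=q^{-m/2}\one_m*\one_{r,r}$---is exactly the content of Lemma~\ref{key}, which the paper proves beforehand and invokes in the same way.
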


\noindent Here $\one_m$ and $\one_{i, i}$ are elements of the Hecke algebra defined in \eqref{one-m-i} (see \S \ref{application} for the definitions). We have a similar expression for $\mathbb{L}(\Sym^4)$ which is the content of Theorem \ref{thm-LSym4}, but will omit it here.  It is interesting to note that, though symmetric power $L$-functions have been studied for some time, this result and \cite{Gu} appear to be the first computation of $\mathrm{Sym}^3$ in terms of the natural additive basis of the Hecke algebra that has appeared in the literature. We will discuss the relationship between our result and that of \cite{Gu} at the end of \S \ref{application}.  We remark that Casselman has informed us that he has also independently obtained a result similar to Theorem \ref{thm-LSym3-intro}, but to date it has not been written up.

We close the introduction by describing how the current paper was organized. In \S \ref{identities}, we briefly summarize the properties of $p(j, k, n)$ and the Gaussian polynomials and provide combinatorial proofs for Theorem \ref{thm1-intro} and Theorem \ref{thm2-intro}. In \S \ref{Explicit},  we prove Theorem \ref{thm3-intro} and Theorem \ref{thm4}. In the last section, we explain how to use the formula $N(j, k, n)$ to explicitly invert the Satake correspondence for $\Sym^k$ for  $\GL_2$, where $k=3, 4$ and prove Theorem \ref{thm-LSym3-intro} and Theorem \ref{thm-LSym4}.

\section*{Acknowledgments}

The first named author is grateful to the professor Youn-Seo Choi for his generous support of her visit to KIAS (Korea Institute for Advanced Study) in August of 2016, under whose auspices she had the chance begin this work. The authors thank G. Andrews, B. Kronholm, B. Casselman, W.W. Li and K. Ono for their valuable suggestions and comments. Lastly, the authors also thank the Department of Mathematics at Yonsei University for their support.

\section{Partition identities arising in Langlands' beyond endoscopy idea}\label{identities}

Let us begin this section by recalling the standard notation for a partition. A partition $\lambda$ of $n$ is written as
$$\lambda=(\lambda_1,\lambda_2,\dots,\lambda_k),$$
where $n=\sum_{i=1}^k \lambda_i$ and $\lambda_1\geq \lambda_2 \geq \cdots\geq\lambda_k\geq 0$. For such a partition $\lambda$,  $\lambda_1$ is said to be the largest part and each non-zero $\lambda_i$ is called a part.

For fixed non-negative integers $j$ and $k$, let $p(j,k,n)$ be the number of partitions of $n$ into at most $k$ parts, having largest part at most $j$ (see \cite[p. 28]{stan} for example). Then it is well-known that the generating function of $p(j, k, n)$ is given by the Gaussian polynomial
\begin{equation}\label{partition-eqn}
\sum_{n\geq 0}p(j, k, n)q^n=\left[\begin{array}{c}j+k\\ k\end{array}\right]_q
\end{equation}(see \cite[Proposition 1.7.3]{stan}, for instance), where
\begin{equation}\label{Gauss}
\left[\begin{array}{c}j+k\\ k\end{array}\right]_q=\frac{(1-q^{j+k})(1-q^{j+k-1})\cdots (1-q^{j+1})}{(1-q)(1-q^2)\cdots(1-q^k)}.
\end{equation}
Note that the Gaussian polynomial \eqref{Gauss} is indeed a polynomial of degree $jk$. The Gaussian polynomial was first introduced by Gauss in his evaluation of the Gauss sum \cite{Ra} and their coefficients have nice symmetry. For example, it is easy to check that $p(j, k, n)=p(k, j, n).$ The Gaussian polynomial plays a very important role in counting symmetric polynomials and in the enumerative theory of projective spaces defined over the finite field $\mathbb{F}_q$.

The goal of this section is to prove the partition identities in Theorem \ref{thm1-intro} and Theorem \ref{thm2-intro}.  For convenience, we restate the theorems in this section:

\begin{thm}[Corollary 3.3 in \cite{Hahn-PAMS}] \label{thm1} 
For any integer $\ell \geq 1$, one has that
\begin{align}
p(4\ell -2, 3, 6\ell -3)&-p(4\ell -2, 3, 6\ell -4)=0,\label{zero}\\
p(4\ell, 3, 6\ell)&-p(4\ell, 3, 6\ell-1)=1. \label{one1} 
\end{align}
\end{thm}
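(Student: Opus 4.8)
The plan is to convert each difference $p(j,3,n)-p(j,3,n-1)$ into the difference of two transparent counts via an ``add a box'' bijection, and then to evaluate those counts along the two progressions in the statement (one could alternatively just specialize Theorem \ref{thm3-intro}, but a direct combinatorial argument is cleaner and will generalize to Lemma \ref{lem1}). Fix $j$. First I would study the map sending a partition $(\lambda_1,\lambda_2,\lambda_3)$ of $n-1$ into at most three parts with $\lambda_1\le j-1$ to the partition $(\lambda_1+1,\lambda_2,\lambda_3)$ of $n$. Since $\lambda_1+1>\lambda_1\ge\lambda_2$, the image always has $\mu_1>\mu_2$, and deleting the added box recovers $\lambda$; hence this map is a bijection from the partitions of $n-1$ counted by $p(j,3,n-1)$ with largest part $\le j-1$ onto the partitions of $n$ counted by $p(j,3,n)$ with $\mu_1>\mu_2$.

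Next I would split both sides by the appropriate dichotomy. Among the partitions of $n-1$ counted by $p(j,3,n-1)$, those with largest part $\le j-1$ are exactly the domain above and the rest have largest part equal to $j$; among those counted by $p(j,3,n)$, the ones with $\mu_1>\mu_2$ are exactly the image and the rest satisfy $\mu_1=\mu_2$. Cancelling the common cardinality yields
\[
p(j,3,n)-p(j,3,n-1)=\#\{(a,a,b):\ j\ge a\ge b\ge 0,\ 2a+b=n\}-p(j,2,n-1-j),
\]
where the last term counts the partitions $(j,c,d)$ of $n-1$ with $j\ge c\ge d\ge 0$, equivalently partitions of $n-1-j$ into at most two parts each at most $j$. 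Both terms on the right are one-parameter counts once $j$ and $n$ are specialized.

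Finally I would evaluate. For \eqref{zero}, set $j=4\ell-2$ and $n=6\ell-3$: here $n$ is odd, so in $(a,a,b)$ the part $b=n-2a$ is odd, the conditions $0\le b\le a$ confine $a$ to an interval carrying exactly $\ell$ admissible values (the bound $a\le j$ being automatic), while $n-1-j=2\ell-2\le j$ gives $p(j,2,2\ell-2)=\lfloor(2\ell-2)/2\rfloor+1=\ell$; the difference is $0$. For \eqref{one1}, set $j=4\ell$ and $n=6\ell$: now $n$ is even, $b=n-2a$ is even, the first count becomes $\ell+1$, and $n-1-j=2\ell-1\le j$ gives $p(j,2,2\ell-1)=\lfloor(2\ell-1)/2\rfloor+1=\ell$; the difference is $1$.

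The bijection step is routine; the real work---and the step where off-by-one and parity slips are most likely---is the endpoint book-keeping in the last paragraph: reading the parity of the smallest part off the parity of $n$, pinning down the exact range of the free parameter, and confirming that the constraints $a\le j$ and $c\le j$ are inactive there. This is precisely the accounting organized once and for all in the general Lemma \ref{lem1}, and I would cross-check it against the smallest cases ($\ell=1$ gives $p(2,3,3)=p(2,3,2)=2$ and $p(4,3,6)=5$, $p(4,3,5)=4$) to keep it honest.
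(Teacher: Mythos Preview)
Your argument is correct. The identity
\[
p(j,3,n)-p(j,3,n-1)=\#\{(a,a,b):\ j\ge a\ge b\ge 0,\ 2a+b=n\}-p(j,2,n-1-j)
\]
is sound, and your evaluations in the two cases are accurate (including the check that the bounds $a\le j$ and $c\le j$ are slack).

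Your route, however, is genuinely different from the paper's proof of this theorem. The paper builds a single explicit bijection $P\to P'$ (respectively $Q\to Q'$) by subtracting a box from the \emph{smallest} part when $\lambda_3\ne0$, and then handling the leftover $\lambda_3=0$ partitions by ad hoc ``halving'' maps such as $(\lambda_1,\lambda_2,0)\mapsto(\lambda_1,\tfrac{\lambda_2-1}{2},\tfrac{\lambda_2-1}{2})$, with the extra partition $(4\ell,2\ell,0)$ isolated by hand in the second identity. You instead add a box to the \emph{largest} part, which cleanly reduces the difference to two one-parameter counts without any clever secondary maps. Your method is closer in spirit to the paper's later proof of Theorem~\ref{thm3-intro}, which also cancels a large bijection and counts complements, though even there the paper removes a box from $\lambda_3$ rather than adding one to $\lambda_1$, so the residual sets being counted are different from yours. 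The paper's approach makes the bijection fully explicit and self-contained for each identity; yours is shorter, avoids case analysis, and immediately yields a uniform formula for $p(j,3,n)-p(j,3,n-1)$ valid for all $j,n$. One small quibble: your pointer to Lemma~\ref{lem1} is slightly off, since that lemma concerns differences of the form $p(j,3,n)-p(j-1,3,n-3)$ rather than the endpoint counts you are doing here.
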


\begin{proof}
Let $P$ be the set of all the partitions $\lambda$ of $6\ell-3$ into at most $3$ parts, having largest part at most $4\ell-2$ and let $P'$ be the set of all partitions $\lambda'$ of $6\ell-4$ into at most $3$ parts, having largest part at most $4\ell-2$. Then 
$$|P|=p(4\ell-2,3,6\ell-3)\quad \text{and}\quad |P'|= p(4\ell-2,3,6\ell-4).$$
To prove \eqref{zero},  we establish a one-to-one correspondence between $P$ and $P'$.

Let $\lambda$ be a partition of $6\ell-3$ in $P$ such that $\lambda=(\lambda_1, \lambda_2, \lambda_3)$ with $0 \leq \lambda_i \leq 4\ell-2$ and $\lambda_1\geq \lambda_2\geq \lambda_3$. Suppose that $\lambda_3 \neq 0$. One has a map
$
P \longrightarrow P' $ defined by
\begin{align}
\lambda=(\lambda_1, \lambda_2, \lambda_3) \longmapsto \lambda'=(\lambda'_1, \lambda'_2, \lambda'_3)=(\lambda_1, \lambda_2, \lambda_3-1).\label{caseP1}
\end{align}
One verifies that $\lambda'$ is indeed in $P'$. Moreover, notice that $\lambda'_2 \neq \lambda '_3$ because $\lambda_2\geq \lambda_3$ implies that $\lambda'_2=\lambda_2> \lambda_3-1=\lambda'_3$.

Now assume that $\lambda_3=0$. If $\lambda_1$ is even, then we define a map $P\longrightarrow P'$ as follows:
\begin{align}
\lambda=(\lambda_1, \lambda_2, 0) \longmapsto \lambda' =\left(\lambda_1, \frac{\lambda_2 -1}{2}, \frac{\lambda_2 -1}{2} \right). \label{caseP21}
\end{align}
Again, one needs to verify that $\lambda'$ in \eqref{caseP21} is indeed a partition of $6\ell-4$ in $P'$, which means in turn that we must show that $\lambda_2$ is a non-negative odd integer: Since $\lambda_1+\lambda_2=6\ell-3$ is odd and $\lambda_1$ is even by assumption, it is easy to check that $\lambda_2$ must be odd. Moreover, $\lambda_1 \leq 4\ell-2$ implies that $\lambda_2\geq 2\ell-1 \geq 1$ since $\lambda_1+\lambda_2=6\ell-3$.
Therefore, $\lambda '$ is indeed in $P'$ with $\lambda '_2 = \lambda '_3$ and $\lambda '_1 > \lambda '_2 + \lambda '_3. $

Next, assume that $\lambda _3=0$ and $\lambda_1$ is odd.  In this case, we define a map $P\longrightarrow P'$ by
\begin{align}
\lambda=(\lambda_1, \lambda_2, 0) \longmapsto \lambda ' =\left(\lambda_2, \frac{\lambda_1 -1}{2}, \frac{\lambda_1 -1}{2} \right).\label{caseP22}
\end{align}
Once again we will need to verify that $\lambda'$ in \eqref{caseP22} above is indeed in $P'$. Since  $\lambda_1 \leq 4\ell-2$ and $\lambda_2 \geq 2\ell-1$, one has that $0<\frac{\lambda_1-1}{2} < 2\ell-1$ and hence $\lambda_2>\frac{\lambda_1-1}{2}$. So $\lambda '$ is a partition of $6\ell -4$ in $P'$ with $\lambda '_2 = \lambda '_3$ and $\lambda '_1 \leq \lambda '_2 + \lambda '_3.$ 

One can check that \eqref{caseP1}, \eqref{caseP21} and \eqref{caseP22} cover all the possible cases. The inverse maps are clear in each case. This completes the proof for \eqref{zero}.

The argument for proving \eqref{one1} is similar, but unlike \eqref{zero}, one notices that the right hand side of \eqref{one1} is $1$ instead of $0$. Let $Q$ be the set of all the partitions $\mu$ of $6\ell$ into at most $3$ parts, having  largest part at most $4\ell$, excluding one partition, namely 
\begin{equation}\label{extra}
\mu=(4\ell, 2\ell, 0).
\end{equation}
Similarly, let $Q'$ be the set of all partitions $\mu'$ of $6\ell-1$ into at most $3$ parts, having the largest part at most $4\ell$. If we could establish the one-to-one correspondence between $Q$ and $Q'$ as before, then we obtain the desired result, namely
$$p(4\ell, 3, 6\ell)-1=|Q|-1=|Q'|=p(4\ell, 3, 6\ell-1),$$
which proves \eqref{one1}.

We let  $\mu$ be a partition of $6\ell$ in $Q$, where $\mu=(\mu_1,\mu_2,\mu_3)$ with $0 \leq \mu_i \leq 4\ell$ for $i=1,2,3$.  Assume first that $\mu_3 \neq 0$. Then one has a map $Q \longrightarrow Q'$ defined by
\begin{equation}\label{caseQ1}
\mu=(\mu, \mu_2, \mu_3) \longmapsto \mu ' =(\mu_1, \mu_2, \mu_3 -1). 
\end{equation}
Clearly $\mu '=(\mu '_1,\mu '_2,\mu '_3 ) \in Q'$ such that $\mu '_2 \neq \mu '_3$.

Suppose now that $\mu _3=0$. If $\mu_1$ is odd, then we define a map $Q \longrightarrow Q'$ as follows:
\begin{equation}\label{caseQ21}
\mu=(\mu_1, \mu_2, 0) \longmapsto \mu '=\left(\mu_1, \frac{\mu_2 -1}{2}, \frac{\mu_2 -1}{2} \right).
\end{equation}
One needs to check that the image $\mu'$ in \eqref{caseQ21} is indeed a partition in $Q'$.
Since $\mu_1+\mu_2=6\ell$ is even and $\mu_1$ is odd, so clearly $\mu_2$ should be odd, so $\frac{\mu_2-1}{2}$ is a non-negative integer.  In addition, since $\mu_1 \leq 4\ell$ and $\mu_1+\mu_2=6\ell$, we have that $\mu_1\geq \mu_2\geq 2 \ell$. Therefore, $\mu '$ is a partition in $Q'$ with $\mu '_2 = \mu '_3$. Note that $\mu '_1 > \mu '_2 + \mu '_3$ because $\mu'_1=\mu_1\geq \mu_2 >\mu_2-1=\mu'_2+\mu'_3$.

Finally, assume that $\mu _3=0$ and $\mu_1$ is even. Define a map $Q\longrightarrow Q'$ by
\begin{equation}\label{caseQ22}
\mu=(\mu_1, \mu_2, 0) \longmapsto \mu ' =\left(\mu_2-1, \frac{\mu_1 }{2}, \frac{\mu_1 }{2} \right). 
\end{equation}
As before, one needs to check the image $\mu'$ in \eqref{caseQ22} is in $Q'$. Since $\mu_1\leq 4\ell$ and $\mu_1+\mu_2=6\ell$, so $\mu_2\geq 2\ell$ which is in turn $\mu_2 -1 \geq 2\ell-1$. Moreover, check that $1\leq \frac{\mu_1}{2}\leq 2 \ell$. In order for
$$
\left(\mu_2-1, \frac{\mu_1 }{2}, \frac{\mu_1 }{2} \right)
$$
to be in $Q'$, we must have that $\mu_2-1\geq  \frac{\mu_1 }{2}$. This is always true except when $\mu_2=2\ell$. This is the reason we put aside $\mu=(4\ell, 2\ell, 0)$ in \eqref{extra} in the first place. As long as $\mu_2 \neq 2\ell$ then the image $\mu '$ of \eqref{caseQ22} is in $Q'$ such that $\mu '_2 = \mu '_3$ and $\mu '_1 \leq \mu '_2 + \mu '_3$. All the partitions in \eqref{extra}, \eqref{caseQ1}, \eqref{caseQ21} and \eqref{caseQ22} cover all possible cases. Again the inverse maps are clear as well and this completes the proof.
\end{proof}

To prove Theorem \ref{thm2}, we will need first to prove the following two technical lemmas. Lemma \ref{lem0} is probably well-known to the experts, but we include a proof for the reader's convenience.

\begin{lem}\label{lem0}
For any fixed integers $k, j \geq 1$, the number of partitions of $n$ into \textbf{exactly} $j$ parts, having largest part at most $k$ is $p(k-1, j, n-j)$.
\end{lem}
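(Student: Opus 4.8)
The plan is to set up an explicit bijection between the two sets being counted. Let $A$ denote the set of partitions of $n$ into \emph{exactly} $j$ parts with largest part at most $k$, so $\lambda=(\lambda_1,\dots,\lambda_j)$ with $k\geq \lambda_1\geq \cdots\geq\lambda_j\geq 1$ and $\sum\lambda_i=n$. Let $B$ denote the set of partitions of $n-j$ into \emph{at most} $j$ parts with largest part at most $k-1$, so $|B|=p(k-1,j,n-j)$. The natural map to try is $\lambda\mapsto\mu$ where $\mu_i=\lambda_i-1$ for $i=1,\dots,j$; that is, subtract $1$ from every part (remove the first column of the Young diagram).

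First I would check this map is well-defined into $B$: since each $\lambda_i\geq 1$, each $\mu_i=\lambda_i-1\geq 0$, and the weakly decreasing order is preserved, so $\mu$ is a partition into at most $j$ parts (some trailing parts may become $0$ and drop out); its largest part is $\lambda_1-1\leq k-1$; and $\sum_i\mu_i=\sum_i\lambda_i-j=n-j$. Hence $\mu\in B$. Next I would exhibit the inverse: given $\mu=(\mu_1,\dots,\mu_j)\in B$ (padded with zeros to length exactly $j$ if necessary), set $\lambda_i=\mu_i+1$; then each $\lambda_i\geq 1$, so $\lambda$ has exactly $j$ parts, the ordering is preserved, $\lambda_1=\mu_1+1\leq k$, and $\sum_i\lambda_i=(n-j)+j=n$, so $\lambda\in A$. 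The two maps are mutually inverse since adding and then subtracting $1$ from each coordinate is the identity, giving $|A|=|B|=p(k-1,j,n-j)$.

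There is essentially no serious obstacle here; the only point requiring a word of care is the bookkeeping around the phrase ``at most $j$ parts'' versus ``exactly $j$ parts'' — one must be comfortable viewing a partition into at most $j$ parts as a $j$-tuple padded with zeros, so that ``subtract $1$ from each part'' and ``add $1$ to each part'' are honest inverse operations on $j$-tuples. I would state this zero-padding convention explicitly at the outset to make the bijection clean, and then the verification reduces to the three routine checks above (nonnegativity/positivity of parts, the bound on the largest part, and the size of $n$).
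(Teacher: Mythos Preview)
Your proof is correct and is essentially the same as the paper's: both use the bijection that removes the first column of the Ferrers diagram (subtracting $1$ from each part) with inverse given by adding $1$ to each part. Your write-up is in fact more careful than the paper's, as you explicitly spell out the zero-padding convention and verify all three conditions for each direction.
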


\begin{proof}
Let $\lambda=(\lambda_1,\lambda_2, \dots, \lambda_j)$ be a partition of $n-j$ into at most $j$ parts, having largest part at most $k-1$. Thus $k-1\geq\lambda_1\geq \lambda_2 \geq \dots \geq\lambda_j\geq 0$ and $\sum_{i=1}^{j}\lambda_i=n-j$. Then the partition
$$
\lambda+1=(\lambda_1+1, \lambda_2+1, \dots, \lambda_j+1)
$$
will be clearly a partition of $n$ into exactly $j$ parts, having largest part at most $k$. On the other hand, let $\mu$ be any partition of $n$ into exactly $j$ parts, having the largest part at most $k$. Once we remove the first column in the corresponding Ferrers diagram for $\mu$ (see \cite{Andrews}, for example, for the Ferrers diagram), one obtains a partition of $n-j$ into at most $j$ parts having largest part at most $k-1$.
\end{proof}

\begin{lem}\label{lem1} 
For any integer $\ell \geq 1$, one has the following identities:
\begin{align}
p(4\ell,3,6\ell)-p(4\ell-1,3,6\ell-3)=\ell+1, \label{a}\\
p(4\ell,3,6\ell-1)-p(4\ell-1,3,6\ell-4)=\ell+1, \label{b}\\
p(4\ell+2,3,6\ell+3)-p(4\ell+1,3,6\ell)=\ell+1, \label{c}\\
p(4\ell+2,3,6\ell+2)-p(4\ell+1,3,6\ell-1)=\ell+2. \label{d}
\end{align}
\end{lem}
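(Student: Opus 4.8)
\textbf{Proof proposal for Lemma \ref{lem1}.}

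The plan is to prove each of the four identities by the same bijective strategy already used for Theorem \ref{thm1}: interpret both sides as cardinalities of explicit sets of partitions into at most $3$ parts with a bounded largest part, and exhibit an (almost) bijection between them, tracking exactly the finitely many partitions in one set that have no image in the other. For identity \eqref{a}, let $A$ be the set of partitions $\lambda=(\lambda_1,\lambda_2,\lambda_3)$ of $6\ell$ with $\lambda_i\le 4\ell$ and let $A'$ be the set of partitions of $6\ell-3$ with parts at most $4\ell-1$; I would send $\lambda\mapsto(\lambda_1-1,\lambda_2-1,\lambda_3-1)$ when all three parts are positive, giving a bijection onto those elements of $A'$ with $\lambda_1\le 4\ell-1$ automatically. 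The partitions of $A$ \emph{not} covered are those with $\lambda_3=0$, equivalently pairs $(\lambda_1,\lambda_2)$ with $\lambda_1+\lambda_2=6\ell$, $2\ell\le\lambda_2\le\lambda_1\le 4\ell$; there are exactly $\ell+1$ of these ($\lambda_2\in\{2\ell,2\ell+1,\dots,3\ell\}$). Meanwhile the elements of $A'$ with largest part exactly $4\ell-1$ — which are missed the other way — I would match against a suitable subfamily; the bookkeeping must show the net discrepancy is precisely $\ell+1$. An efficient alternative is to avoid a second bijection entirely: write $p(4\ell,3,6\ell)=p(4\ell-1,3,6\ell)+(\text{number of partitions of }6\ell\text{ into }\le 3\text{ parts with largest part exactly }4\ell)$, and likewise peel off the top part, reducing everything to counting partitions of a fixed number into few parts with a prescribed largest part, which Lemma \ref{lem0} and the symmetry $p(j,k,n)=p(k,j,n)$ handle directly.

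In more detail, the clean route uses the recursion
\begin{equation*}
p(j,3,n)-p(j-1,3,n)=\#\{(\lambda_1,\lambda_2,\lambda_3):\lambda_1+\lambda_2+\lambda_3=n,\ \lambda_1=j,\ j\ge\lambda_2\ge\lambda_3\ge 0\},
\end{equation*}
so that the left side of \eqref{a} equals the number of partitions of $6\ell$ into at most $3$ parts with largest part exactly $4\ell$, i.e.\ the number of partitions of $2\ell$ into at most $2$ parts each at most $4\ell$ — which is just $\lfloor 2\ell/2\rfloor+1=\ell+1$. Identity \eqref{b} reduces similarly to partitions of $6\ell-1$ with largest part exactly $4\ell$, i.e.\ partitions of $2\ell-1$ into at most $2$ parts, giving $\lfloor(2\ell-1)/2\rfloor+1=\ell+1$. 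Identity \eqref{c}, after using $p(4\ell+2,3,n)-p(4\ell+1,3,n)=$ (partitions of $n$ with largest part exactly $4\ell+2$), becomes partitions of $6\ell+3$ with largest part $4\ell+2$, i.e.\ partitions of $2\ell+1$ into at most $2$ parts, giving $\ell+1$; and \eqref{d} becomes partitions of $6\ell+2$ with largest part $4\ell+2$, i.e.\ partitions of $2\ell$ into at most $2$ parts, giving $\ell+2$.

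The main obstacle, should one insist on the fully combinatorial (double-bijection) presentation matching the style of the Theorem \ref{thm1} proof, is the careful case analysis at the boundary: one must split on the parity of the largest part and on whether the smallest part vanishes, and then verify in each case that the constructed map lands in the target set and is invertible, isolating exactly the $\ell+1$ (resp.\ $\ell+2$) exceptional partitions. This is the same kind of argument as before, only with more cases, so the risk is purely in the bookkeeping — making sure no exceptional partition is double-counted or omitted, and that the inequality constraints $\lambda_2\ge\lambda_3$ and $\lambda_1\le j$ are preserved throughout. I would organize the proof by first establishing the recursion displayed above as a one-line lemma (it is immediate from the definition of $p(j,3,n)$ by conditioning on the value of the largest part), then dispatching all four identities uniformly by the count of two-part partitions, invoking Lemma \ref{lem0} only if one prefers to phrase the final count as $p(k-1,2,m)$ rather than computing $\lfloor m/2\rfloor+1$ by hand.
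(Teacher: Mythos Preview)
Your ``clean route'' in the second paragraph contains a genuine error. The recursion you display,
\[
p(j,3,n)-p(j-1,3,n)=\#\{\lambda : \lambda_1+\lambda_2+\lambda_3=n,\ \lambda_1=j,\ j\ge\lambda_2\ge\lambda_3\ge 0\},
\]
is correct, but it is \emph{not} the left-hand side of any of \eqref{a}--\eqref{d}: in each identity the third argument also drops by $3$, so the quantity at stake is $p(j,3,n)-p(j-1,3,n-3)$. For \eqref{a} and \eqref{c} your computation happens to give the right number only because the Gaussian symmetry $p(j-1,3,m)=p(j-1,3,3(j-1)-m)$ turns $n$ into $n-3$ in precisely those two cases; you never invoke this, and for \eqref{b} and \eqref{d} the symmetry does not line up. Concretely, at $\ell=1$ in \eqref{b} your recipe counts the single partition $(4,1)$ of $5$ with largest part exactly $4$, whereas the identity requires $\ell+1=2$. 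Your arithmetic then masks the mismatch: $\lfloor(2\ell-1)/2\rfloor+1=\ell$, not $\ell+1$, and the number of partitions of $2\ell$ into at most two parts is $\ell+1$, not $\ell+2$.

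The correct argument is already present in your first paragraph, and it is exactly what the paper does. The map $\lambda\mapsto(\lambda_1-1,\lambda_2-1,\lambda_3-1)$ on partitions with $\lambda_3\ge 1$ is a bijection from the partitions of $n$ into \emph{exactly} three parts each $\le j$ onto \emph{all} of $P(j-1,3,n-3)$; nothing is ``missed the other way,'' since adding $1$ to every part of any $\mu\in P(j-1,3,n-3)$ lands back in the source (this is Lemma~\ref{lem0}). Hence
\[
p(j,3,n)-p(j-1,3,n-3)=p(j,2,n),
\]
the number of partitions of $n$ into at most two parts each $\le j$, and one reads off $p(4\ell,2,6\ell)=p(4\ell,2,6\ell-1)=p(4\ell+2,2,6\ell+3)=\ell+1$ and $p(4\ell+2,2,6\ell+2)=\ell+2$ by listing. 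So your bijection was the right one; the error was abandoning it for a recursion that conditions on the value of the largest part rather than on the number of parts.
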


\begin{proof}
To prove \eqref{a}, let us list all the partitions of $6\ell$ into at most $2$ parts having largest part at most $4\ell$, namely
$$(4\ell,2\ell,0),~(4\ell-1,2\ell+1,0), \dots, (3\ell,3\ell,0).$$
Notice that there can't be partitions with only $1$ part in the list because having largest part at most $4\ell$ implies that it can't be a partition of $6\ell$. Therefore, $p(4\ell,2,6\ell)=\ell+1$.

By Lemma \ref{lem0}, one has that the number of partitions of $6\ell$ into exactly $3$ parts having the largest part at most $4\ell$ is simply $p(4\ell-1,3,6\ell-3)$. Hence we conclude the following identity
$$p(4\ell,3,6\ell)-(\ell+1)=p(4\ell-1,3,6\ell-3).$$
This proves \eqref{a}. Similarly, to prove that \eqref{b}, we once again use the list of partitions of $6\ell-1$ into at most $2$ parts, having largest part at most $4\ell$ and apply Lemma \ref{lem0} as before. The proofs for \eqref{c} and \eqref{d} follow in this obvious manner.
\end{proof}

By using Theorem \ref{thm1} and Lemma \ref{lem1}, we obtain new identities that are similar to those in Theorem \ref{thm1} which we now record below:

\begin{thm} \label{thm2} 
For any integer $\ell\geq 1$, one has that
\begin{align}
p(4\ell-1,3,6\ell-3)&-p(4\ell-1,3,6\ell-4)=1, \label{one2}\\
p(4\ell+1,3,6\ell)&-p(4\ell+1,3,6\ell-1)=1. \label{one3}
\end{align}
\end{thm}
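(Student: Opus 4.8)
The plan is to obtain Theorem~\ref{thm2} by a purely formal manipulation of the identities already established in Theorem~\ref{thm1} and Lemma~\ref{lem1}; no new bijection is needed, because \eqref{a}--\eqref{d} let us rewrite the ``telescoping'' differences $p(j,3,n)-p(j,3,n-1)$ of interest in terms of differences whose values are already known.

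First I would prove \eqref{one2}. Solving \eqref{a} for $p(4\ell-1,3,6\ell-3)$ and \eqref{b} for $p(4\ell-1,3,6\ell-4)$ and subtracting, the two copies of $\ell+1$ cancel, and one is left with
\[
p(4\ell-1,3,6\ell-3)-p(4\ell-1,3,6\ell-4)=p(4\ell,3,6\ell)-p(4\ell,3,6\ell-1);
\]
the right-hand side equals $1$ by \eqref{one1}, which finishes \eqref{one2}.

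Next I would prove \eqref{one3} in the same way, solving \eqref{c} for $p(4\ell+1,3,6\ell)$ and \eqref{d} for $p(4\ell+1,3,6\ell-1)$ and subtracting. This time the constants combine to $(\ell+2)-(\ell+1)=1$, so
\[
p(4\ell+1,3,6\ell)-p(4\ell+1,3,6\ell-1)=\bigl(p(4\ell+2,3,6\ell+3)-p(4\ell+2,3,6\ell+2)\bigr)+1.
\]
The bracketed difference is precisely the left-hand side of \eqref{zero} with $\ell$ replaced by $\ell+1$ (indeed $4\ell+2=4(\ell+1)-2$, $6\ell+3=6(\ell+1)-3$, $6\ell+2=6(\ell+1)-4$, and $\ell+1\geq 1$, so \eqref{zero} applies), hence it vanishes, leaving $1$.

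The only point requiring attention is the bookkeeping of the index shift $\ell\mapsto\ell+1$ and checking that each invoked identity lies within its stated range of validity (all our inputs hold for $\ell\geq 1$, which is exactly the hypothesis of Theorem~\ref{thm2}); there is no genuine obstacle, since all the combinatorial content was already absorbed into the proofs of Theorem~\ref{thm1} and Lemma~\ref{lem1}. One could alternatively carry out direct bijective arguments in the style of the proof of Theorem~\ref{thm1}, but deducing the identities from the earlier ones is shorter and is what the surrounding text signals.
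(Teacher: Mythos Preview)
Your proposal is correct and follows essentially the same approach as the paper's own proof: subtract \eqref{a} from \eqref{b} and invoke \eqref{one1} to get \eqref{one2}, then subtract \eqref{c} from \eqref{d} and invoke \eqref{zero} (with the index shift $\ell\mapsto\ell+1$, which you make explicit) to get \eqref{one3}. Your citation of \eqref{one1} in the first step is the right one; the paper's text there reads ``by \eqref{zero}'' but the displayed identity it uses is in fact \eqref{one1}.
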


\begin{proof}
To obtain \eqref{one2}, we use \eqref{a} and \eqref{b} of Lemma \ref{lem1} to deduce that
\begin{equation}\label{eqn-one2}
p(4\ell-1,3,6\ell-3)-p(4\ell-1,3,6\ell-4)=p(4\ell,3,6\ell)-p(4\ell,3,6\ell-1).
\end{equation}
On the other hand, by \eqref{zero} of Theorem \ref{thm1}, we know that the right hand side of \eqref{eqn-one2} is $1$, namely $$p(4\ell,3,6\ell)-p(4\ell,3,6\ell-1)=1,$$
which is \eqref{one2}.

Similarly, using \eqref{c} and \eqref{d} of Lemma \ref{lem1}, we obtain that
$$ p(4\ell+1,3,6\ell)-p(4\ell+1,3,6\ell-1)=p(4\ell+2,3,6\ell+3)-p(4\ell+2,3,6\ell+2)+1.$$
However, since $p(4\ell+2,3,6\ell+3)-p(4\ell+2,3,6\ell+2)=0$ by \eqref{zero} of Theorem \ref{thm1}, this completes the proof of \eqref{one3}.
\end{proof}

We remark that from \eqref{zero} and \eqref{one1} in Theorem \ref{thm1}, for any $\ell\geq 1$, we have obtained all the values for
\begin{equation}\label{even}
 p(2\ell,3,3\ell)-p(2\ell,3,3\ell-1).
\end{equation}
Similarly, by the identities \eqref{one2} and\eqref{one3} in Theorem \ref{thm2}, we can obtain all the values for
\begin{equation}\label{odd}
 p(2\ell+1,3,3\ell)-p(2\ell+1,3,3\ell-1)
\end{equation}as well.

\section{Explicit Plethysm}\label{Explicit}

To aid the readers, let us recall the plethysm decomposition for $\GL_2$. Let $V=F^2$ be a $2$-dimensional vector space over a characteristic zero field $F$.

\begin{thm}[Theorem 5.5 \cite{Do}]\label{Pleth}
There is an isomorphism of $\GL_2$-representations
$$
\Sym^j(\Sym^kV)\cong \bigoplus_{n=0}^{\lfloor jk/2\rfloor} (\Sym^{jk-2n}V\otimes {\det}^{jk-n})^{\oplus N(j, k, n)}, 
$$
where $N(j, k, n)$ is the coefficient of $q^n$ in the polynomial $(1-q)\left[\begin{array}{c}j+k\\ k\end{array}\right]_q.$
\end{thm}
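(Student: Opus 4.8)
The plan is to combine the known character-theoretic description of $\Sym^j(\Sym^k V)$ with the generating-function identity \eqref{partition-eqn} for $p(j,k,n)$. First I would recall that the irreducible polynomial representations of $\GL_2$ are exactly $\Sym^m V \otimes \det^d$, and that characters can be read off by restricting to the diagonal torus: if $t_1, t_2$ are the eigenvalues, then the character of $\Sym^m V$ is the complete homogeneous symmetric function $h_m(t_1,t_2) = \frac{t_1^{m+1}-t_2^{m+1}}{t_1-t_2}$, while $\det$ contributes $t_1 t_2$. So the statement is equivalent to the symmetric-function identity
\begin{equation*}
h_j\bigl[h_k(t_1,t_2)\bigr] = \sum_{n=0}^{\lfloor jk/2\rfloor} N(j,k,n)\,(t_1t_2)^{jk-n}\, h_{jk-2n}(t_1,t_2),
\end{equation*}
with $N(j,k,n)$ to be identified. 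The key reduction is that $\Sym^j(\Sym^k V)$ has a basis indexed by monomials of degree $j$ in the monomial basis $x^k, x^{k-1}y, \dots, y^k$ of $\Sym^k V$, i.e. by partitions $\lambda$ with at most $j$ parts each of size at most $k$; the torus weight of such a basis vector is $(t_1 t_2)^{?}$ times a power of $t_1/t_2$ determined by $|\lambda|$. Tracking this carefully, the character of $\Sym^j(\Sym^k V)$ equals $(t_1 t_2)^{jk/2}$ times $\sum_{\lambda} (t_1/t_2)^{|\lambda| - jk/2}$, and the inner sum is precisely the Gaussian polynomial $\left[\begin{array}{c}j+k\\ k\end{array}\right]_q$ evaluated at $q = t_1/t_2$, by \eqref{partition-eqn} together with the symmetry $p(j,k,n)=p(j,k,jk-n)$.

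Next I would extract the multiplicities. Writing $q = t_1/t_2$ and using that the character of $\Sym^{jk-2n}V \otimes \det^{jk-n}$ equals $(t_1 t_2)^{jk/2}$ times $\frac{q^{(jk-2n+1)/2} - q^{-(jk-2n+1)/2}}{q^{1/2}-q^{-1/2}}$, the decomposition into irreducibles becomes the statement that the Gaussian polynomial, as a symmetric Laurent polynomial in $q^{1/2}$, expands in the basis $\{\chi_{jk-2n}\}$ with coefficients $N(j,k,n)$. Inverting this triangular change of basis is exactly the classical computation that the multiplicity of $\chi_{jk-2n}$ equals the difference of consecutive coefficients: $N(j,k,n) = p(j,k,n) - p(j,k,n-1)$ for $n \geq 1$ and $N(j,k,0)=1$, which is \eqref{N-p}. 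Finally, one recognizes $\sum_n \bigl(p(j,k,n) - p(j,k,n-1)\bigr)q^n = (1-q)\sum_n p(j,k,n)q^n = (1-q)\left[\begin{array}{c}j+k\\ k\end{array}\right]_q$, so $N(j,k,n)$ is the coefficient of $q^n$ in $(1-q)\left[\begin{array}{c}j+k\\ k\end{array}\right]_q$, as claimed. (Alternatively, since this is Theorem 5.5 of \cite{Do}, one may simply cite it and only verify the reformulation of the multiplicity in terms of the Gaussian polynomial via \eqref{partition-eqn} and \eqref{N-p}.)

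The main obstacle is the bookkeeping in the first step: correctly matching the grading on $\Sym^j(\Sym^k V)$ — whose weight vectors are indexed by partitions in a $j \times k$ box — with the $q$-grading in \eqref{partition-eqn}, including getting the determinant twist $\det^{jk-n}$ and the half-integer shifts right so that everything is a genuine polynomial representation. Once the character identity
$\mathrm{ch}\,\Sym^j(\Sym^k V) = (t_1t_2)^{jk/2}\left[\begin{array}{c}j+k\\ k\end{array}\right]_{t_1/t_2}$
is established, the passage to multiplicities is the routine triangular inversion recorded in \eqref{N-p}, and the identification with $(1-q)\left[\begin{array}{c}j+k\\ k\end{array}\right]_q$ is immediate.
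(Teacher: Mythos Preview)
The paper does not prove Theorem~\ref{Pleth}; it is quoted from Dolgachev~\cite{Do} and used as input, so there is no argument in the paper to compare against. Your outline is the standard character-theoretic proof (identify the torus character of $\Sym^j(\Sym^kV)$ with the Gaussian polynomial via the weight basis indexed by partitions in a $j\times k$ box, then peel off irreducible characters by successive differences of coefficients, recovering \eqref{N-p}), and that strategy is correct.

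There is one genuine slip in your bookkeeping, though it reflects a typo in the paper's statement rather than a flaw in your method. You assert that the character of $\Sym^{jk-2n}V\otimes\det^{jk-n}$ equals $(t_1t_2)^{jk/2}$ times a Laurent polynomial in $q^{1/2}$; but that character is $(t_1t_2)^{jk-n}\,h_{jk-2n}(t_1,t_2)=(t_1t_2)^{(3jk-4n)/2}\cdot[\cdots]$, of total degree $3jk-4n$, not $jk$. The degrees match the left-hand side only when the determinant exponent is $n$, and indeed the displayed isomorphism is already false for $j=1$ (it would say $\Sym^kV\cong\Sym^kV\otimes\det^k$). Carried out carefully, your argument proves the correct Cayley--Sylvester formula with $\det^{n}$; the exponent $jk-n$ in the statement is a misprint that your proposal silently absorbs rather than detects.
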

\noindent Here, again
$
\left[\begin{array}{c}j+k\\ k\end{array}\right]_q
$ is the Gaussian polynomial defined as in \eqref{Gauss}. 

From the fact \eqref{partition-eqn} that the generating function for $p(j, k, n)$ is the Gaussian polynomial and by the plethysm expansion of Theorem \ref{Pleth}, one has the following relation between $N(j, k, n)$ and $p(j, k, n)$, namely
$$
N(j, k, n)=p(j, k, n)-p(j, k, n-1), \quad n\geq 1.
$$
Here, by convention, $N(j, k, n)=1$ and $p(j, k, n)=1$ whenever one of the values of $j$, $k$, $n$ is zero.

Therefore all the partition identities mentioned in \S \ref{identities} are related to the multiplicity $N(j, 3, n)$. For instance, the values for \eqref{even} and \eqref{odd} are precisely the multiplicities $N(2\ell, 3, 3\ell)$ and $N(2\ell-1, 3, 3\ell-1)$, respectively, since one has that
$$
N(2\ell, 3, 3\ell)= p(2\ell,3,3\ell)-p(2\ell,3,3\ell-1)
$$
and
$$
N(2\ell-1, 3, 3\ell-1)= p(2\ell-1,3,3\ell)-p(2\ell-1,3,3\ell-1).
$$

The unimodality property of Gaussian polynomials implies that the mutiplicity $N(j, k, n)$ is indeed a non-negative integer for $0\leq n\leq \lfloor \frac{jk}{2}\rfloor$ (see \cite{Blessoud} for example). The well-known symmetry of the coefficients of the Gaussian polynomial implies that $N(j,k,n)$ is a negative integer for $\lfloor \frac{jk}{2}\rfloor <n \leq jk$. This is related to the fact that the multiplicity $N(j, k, n)$ in \eqref{Pleth-eqn} is defined only up to the index $n=\lfloor jk/2\rfloor$, not all the way to $n=jk$.

Based on these connections, it is natural to ask if we can apply the same combinatorial arguments as before to compute the dimension formula $N(j, 3, n)$ for all $n$ up to $\lfloor \frac{3j}{2} \rfloor$ for any fixed $j$. This is the content of Theorem \ref{thm3-intro} and we will rewrite the statement here:

\begin{thm} \label{thm3}
Let $j$ and $n$ be non-negative integers such that $0\leq n\leq \lfloor \frac{3j}{2} \rfloor$.  For $0\leq n\leq j$, one has that
\begin{equation}\label{N3a}
N(j,3,n)=
\Bigl\lfloor \frac{n}{2}\Bigr\rfloor - \Bigl\lfloor \frac{n-1}{3}\Bigr\rfloor.
\end{equation}
Likewise, for $j<n \leq \lfloor \frac{3j}{2}\rfloor$, one has that
\begin{equation}\label{N3b}
N(j, 3, n)= \Bigl\lfloor \frac{n}{2}\Bigr\rfloor-\Bigl\lfloor \frac{n-1}{3}\Bigr\rfloor-\Bigl\lfloor \frac{n-j-1}{2} \Bigr\rfloor-1 .
\end{equation}                             
\end{thm}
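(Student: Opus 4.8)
The plan is to establish Theorem~\ref{thm3} by an ``add a box'' bijection, in the same spirit as the explicit maps used for Theorems~\ref{thm1} and~\ref{thm2}. First I would dispose of $n=0$, where $N(j,3,0)=1$ by convention and the right-hand side of \eqref{N3a} equals $0-\lfloor -1/3\rfloor=1$; so assume $n\geq 1$ and use the relation recalled after Theorem~\ref{Pleth}, namely $N(j,3,n)=p(j,3,n)-p(j,3,n-1)$. Write $B(m)$ for the set of partitions of $m$ into at most three parts with largest part at most $j$, so that $|B(m)|=p(j,3,m)$, and picture each element as a Ferrers diagram inside a $3\times j$ box.

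The core step is the injection $\Phi$ from $\{\mu\in B(n-1):\mu_1<j\}$ into $B(n)$ defined by $\Phi(\mu_1,\mu_2,\mu_3)=(\mu_1+1,\mu_2,\mu_3)$, i.e.\ adjoining one box to the top row. Since $\mu_1+1\leq j$ and $\mu_1+1>\mu_1\geq\mu_2$, the image lies in $B(n)$ and satisfies $\lambda_1>\lambda_2$; conversely every $\lambda\in B(n)$ with $\lambda_1>\lambda_2$ comes, uniquely, from $(\lambda_1-1,\lambda_2,\lambda_3)$, which automatically lies in the domain. Thus $\Phi$ is a bijection from $\{\mu\in B(n-1):\mu_1<j\}$ onto $\{\lambda\in B(n):\lambda_1>\lambda_2\}$, and deleting these two sets from $B(n-1)$ and $B(n)$ leaves
$$N(j,3,n)=\#\{\lambda\in B(n):\lambda_1=\lambda_2\}-\#\{\mu\in B(n-1):\mu_1=j\}.$$
All that remains is to evaluate two small counts, and this is exactly where the hypothesis $n\leq\lfloor 3j/2\rfloor$ enters: it forces the box bound ``$\leq j$'' to be inactive for both sets. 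An element of the first set has the form $(a,a,n-2a)$ with $\lceil n/3\rceil\leq a\leq\lfloor n/2\rfloor$, and $a\leq n/2\leq 3j/4<j$ makes the bound $a\leq j$ automatic, so its size is $\lfloor n/2\rfloor-\lceil n/3\rceil+1=\lfloor n/2\rfloor-\lfloor(n-1)/3\rfloor$. An element of the second set has the form $(j,\mu_2,\mu_3)$ with $\mu_2+\mu_3=n-1-j$; for $n\leq j$ there are none, which yields \eqref{N3a}, while for $n>j$ one has $0\leq n-1-j<j$, so the count is the number of partitions of $n-1-j$ into at most two parts, namely $\lfloor(n-j-1)/2\rfloor+1$, which yields \eqref{N3b}.

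I expect the main obstacle to be bookkeeping rather than anything conceptual: one must check carefully that the single top-row injection already determines the difference $p(j,3,n)-p(j,3,n-1)$ without the parity case-analysis that appeared for Theorems~\ref{thm1} and~\ref{thm2} (it does, because only the cardinalities of the two complementary sets are needed, not a bijection between them), and one must track precisely the range in which the box bound is inactive---the bound $n\leq\lfloor 3j/2\rfloor$ being exactly the threshold---together with the elementary identity $\lceil n/3\rceil=\lfloor(n-1)/3\rfloor+1$ used to bring the answer into the stated form. It is also worth confirming that the edge cases $n=j$ and $n=\lfloor 3j/2\rfloor$ are covered correctly by \eqref{N3a} and \eqref{N3b} respectively.
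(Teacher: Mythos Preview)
Your argument is correct. Both your proof and the paper's use an ``add a box'' bijection between subsets of $P(j,3,n)$ and $P(j,3,n-1)$, but on opposite rows: you add a box to the \emph{largest} part, sending $(\mu_1,\mu_2,\mu_3)\mapsto(\mu_1+1,\mu_2,\mu_3)$, while the paper removes a box from the \emph{smallest} part, sending $(\lambda_1,\lambda_2,\lambda_3)\mapsto(\lambda_1,\lambda_2,\lambda_3-1)$. Consequently the leftover sets you must count are different: you get $\{\lambda_1=\lambda_2\}$ in weight $n$ versus $\{\mu_1=j\}$ in weight $n-1$, whereas the paper gets $\{\lambda_3=0\}$ in weight $n$ versus $\{\lambda_2=\lambda_3\}$ in weight $n-1$. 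Your choice has a pleasant structural payoff: the dichotomy $n\le j$ versus $n>j$ in the statement corresponds \emph{exactly} to whether your second set $\{\mu_1=j\}$ is empty, so \eqref{N3a} drops out immediately and \eqref{N3b} is \eqref{N3a} minus a single correction term. In the paper's version, both leftover sets are nonempty throughout, and the box bound $\le j$ intrudes into \emph{both} counts once $n>j$, requiring a short extra simplification (the identity $-(n-j)+\lceil(n-j-1)/2\rceil=-\lfloor(n-j-1)/2\rfloor-1$) to reach \eqref{N3b}. One small inaccuracy in your write-up: the hypothesis $n\le\lfloor 3j/2\rfloor$ is sufficient but not ``exactly the threshold'' for the box bound to be inactive; your argument in fact goes through for all $n\le 2j$.
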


\begin{proof}
Let $P(j,3,n)$ be the set of all partitions of $n$ into at most $3$ parts, having largest part at most $j$. Therefore $|P(j,3,n)|=p(j,3,n)$. 

Define two subsets of $P(j, 3, n)$ as follows: 
\begin{align}
P_A(j,3,n)&=\{\lambda=(\lambda_1,\lambda_2, \lambda_3) ~|~ \lambda \in P(j,3,n), ~~\lambda_3\neq0\},\label{A} \\
P_B(j,3,n)&=\{\lambda=(\lambda_1,\lambda_2, \lambda_3) ~|~ \lambda \in P(j,3,n), ~\lambda_2\neq\lambda_3\}.\label{B}
\end{align}
Clearly, there is a one-to-one correspondence between $P_A(j,3,n)$ and $P_B(j,3,n-1)$ assigning a partition $\lambda\in P_A(j, 3, n)$ to a partition $\lambda'\in P_B(j, 3,n)$ as
$$\lambda=(\lambda_1,\lambda_2, \lambda_3) \longleftrightarrow \lambda'=(\lambda_1,\lambda_2, \lambda_3-1).$$Hence we have that
\begin{equation}\label{AB}
|P_A(j, 3, n)|=|P_B(j, 3, n-1)|.
\end{equation}
Denote by $\bar{P}_A(j, 3, n)$ the complement of $P_A(j, 3, n)$ in $P(j, 3, n)$:
$$
\bar{P}_A(j, 3, n)=P(j, 3, n)-P_A(j, 3, n).
$$The same notation for $B$ will be used. Then one has that
\begin{align}
|\bar{P}_A(j,3,n)|&-|\bar{P}_B(j,3,n-1)|\label{CACBN}\\
&=\Big(p(j, 3, n)-|P_A(j, 3, n)|\Big)-\Big(p(j, 3, n-1)-|P_B(j, 3, n-1)|\Big)\nonumber\\
&=p(j, 3, n)-p(j, 3, n-1)\nonumber\\
&=N(j, 3, n),\nonumber
\end{align}
where in the second equality above we employ \eqref{AB}.

To prove the first claim, assume that $n\leq j$. In this case, observe that $\bar{P}_A(j, 3, n)$ consists of the partitions $\lambda=(\lambda_1, \lambda_2, 0)$, $\lambda\in P(j, 3, n)$. More precisely, one has that
\begin{equation}\label{CA}
\bar{P}_A(j,3,n)=\left\{\lambda\in P(j, 3, n)\,:\,\lambda=(n-k, k,0),\,k=0,1,\dots, \Bigl \lfloor \frac{n}{2} \Bigr \rfloor\right\}.
\end{equation}
Similarly we can describe explicitly $\bar{P}_B(j, 3, n-1)$ as
\begin{equation}\label{CB}
\bar{P}_B(j,3,n-1)=\left\{\lambda\in P(j, 3, n-1)\,:\, \lambda=(n-1-2k,k,k), \, k=0,1,\dots,\Bigl \lfloor \frac{n-1}{3} \Bigr \rfloor \right\}.
\end{equation}
Therefore, from \eqref{CA} and \eqref{CB}, one knows that
$$|\bar{P}_A(j,3,n)|=\Bigl \lfloor \frac{n}{2} \Bigr \rfloor+1 \quad \text{and}\quad 
|\bar{P}_B(j,3,n-1)|=\Bigl \lfloor \frac{n-1}{3} \Bigr \rfloor+1$$which together with \eqref{CACBN} implies that
\begin{equation}\label{N1}
N(j,3,n)=\Bigl \lfloor \frac{n}{2} \Bigr \rfloor -\Bigl \lfloor \frac{n-1}{3} \Bigr \rfloor
\end{equation}which proves the first case in \eqref{N3a}.

Now, suppose that $j+1\leq n \leq \lfloor \frac{3j+1}{2} \rfloor$. Like \eqref{CA} and \eqref{CB} as above, one has, in this case, that
$$\bar{P}_A(j,3,n)=\left\{\lambda\in P(j, 3, n)\,:\,\lambda=(j-k, w-j+k,0), \, k=0,1,\dots \Bigl \lfloor \frac{n}{2} \Bigr \rfloor \right\}$$and 
$$\bar{P}_B(j,3,n-1)=\left\{ \lambda\in P(j, 3, n-1)\,:\, \lambda=(n-1-2k,k,k)\right\},
$$where the index $k$ ranges from
$\Bigr\lceil \frac{n-j-1}{2}\Bigr\rceil$ to $\Bigr\lfloor \frac{n-1}{3} \Bigr \rfloor.$ Here, as usual, for any real $x$, $\lceil x\rceil$ denotes the ceiling function of $x$  defined by $\lceil x\rceil=\min\{n\in \ZZ\,:\, x\leq n\}.$ Therefore we have that
$$|\bar{P}_A(j,3,n)|=\Bigl \lfloor \frac{n}{2}\Bigr \rfloor-(n-j)+1 \quad \text{and}\quad 
|\bar{P}_B(j,3,n-1)|=\Bigl\lfloor \frac{n-1}{3} \Bigr\rfloor-\Bigl\lceil \frac{n-j-1}{2} \Bigr\rceil+1.
$$By \eqref{CACBN},  one obtains for $j+1\leq n \leq \lfloor \frac{3j+1}{2} \rfloor$ that
\begin{align}
N(j,3,n)&=|\bar{P}_A(j,3,n)|-|\bar{P}_B(j,3,n-1)|\nonumber\\
        &=\left(\Bigl\lfloor \frac{n}{2}\Bigr \rfloor-(n-j)+1\right)-\left(\Bigl\lfloor \frac{n-1}{3} \Bigr\rfloor-\Bigl\lceil \frac{n-j-1}{2} \Bigr\rceil+1 \right)\nonumber\\
        &=\Bigl \lfloor \frac{n}{2} \Bigr\rfloor-\Bigl\lfloor \frac{n-1}{3} \Bigr\rfloor-\Bigl\lfloor \frac{n-j-1}{2} \Bigr\rfloor-1. \label{N2}
\end{align}This is indeed the second case of \eqref{N3b}. 
\end{proof}

For the purpose of Satake inversion (see \S \ref{Satake}), it would be useful to have simpler expression for the values of $N(j, 3, n)$ in \eqref{N3a} and \eqref{N3b}. Separating $n$ into residue classes modulo $6$, we arrive at the following corollary:

\begin{cor} \label{cor-thm3}
For fixed $j$, we let 
$$n=6a-b$$ with $b \in \{0,1,2,3,4,5\}$.  For $0\leq n\leq j$, one has that
\begin{equation}\label{cor-N3a}
N(j,3,n)=\begin{cases}
                            a-1 & \text{ if }\,\, b=5\\ 
                            a   & \text{ if }\,\, b=1,2,3,4\\
                            a+1 & \text{ if }\,\, b=0.
                            \end{cases}
\end{equation}                            
Similarly, for $j+1 \leq n \leq \lfloor \frac{3j}{2}\rfloor$, one has that
\begin{equation}\label{cor-N3b-odd}
N(j,3,n)=\begin{cases}
           \frac{j+1}{2}-2a & \text{ if }\,\, b=0,1,2\\ 
         \frac{j+1}{2}-2a+1 & \text{ if }\,\,    b=3,4,5,
                             \end{cases}
                             \end{equation}for odd $j$ and
\begin{equation}\label{cor-N3b-even}
N(j,3,n)=\begin{cases}
             \frac{j}{2}-2a & \text{ if }\,\, b=1\\ 
       \frac{j}{2}-2a+1 & \text{ if }\,\, b=0,2,3,5\\
          \frac{j}{2}-2a+2 & \text{ if }\,\, b=4,
                             \end{cases}
                             \end{equation}for even $j$.
\end{cor}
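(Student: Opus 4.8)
The plan is to deduce Corollary~\ref{cor-thm3} from Theorem~\ref{thm3} by the purely mechanical device of substituting $n=6a-b$, with $b\in\{0,1,2,3,4,5\}$, into the formulas \eqref{N3a} and \eqref{N3b} and then reading off each floor function. The only arithmetic inputs are the bookkeeping identities
\begin{equation*}
\Bigl\lfloor \frac{6a-b}{2}\Bigr\rfloor = 3a - \Bigl\lfloor \frac{b+1}{2}\Bigr\rfloor
\qquad\text{and}\qquad
\Bigl\lfloor \frac{6a-b-1}{3}\Bigr\rfloor =
\begin{cases}
2a-1 & \text{if } b\in\{0,1,2\},\\
2a-2 & \text{if } b\in\{3,4,5\},
\end{cases}
\end{equation*}
valid for every integer $a$, together with the trivial shift rule $\lfloor (m-2t)/2\rfloor=\lfloor m/2\rfloor-t$ for $m,t\in\ZZ$, which I will use to extract the $j$-dependence from the term $\lfloor (n-j-1)/2\rfloor$ in \eqref{N3b}.

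First I would dispose of the range $0\le n\le j$. Here the right-hand side of \eqref{N3a} does not involve $j$, so $N(j,3,n)$ depends only on $n$ modulo $6$; substituting $n=6a-b$ and applying the two identities above turns \eqref{N3a} into $N(j,3,n)=\bigl(3a-\lfloor (b+1)/2\rfloor\bigr)-\lfloor (6a-b-1)/3\rfloor$, and evaluating this for $b=0,1,2,3,4,5$ gives the six values $a+1,\,a,\,a,\,a,\,a,\,a-1$, which is exactly \eqref{cor-N3a}.

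Next I would handle $j+1\le n\le\lfloor 3j/2\rfloor$ via \eqref{N3b}; this range is contained in the range $j+1\le n\le\lfloor (3j+1)/2\rfloor$ for which Theorem~\ref{thm3} is proved, so no boundary issue arises. The new feature is the term $\lfloor (n-j-1)/2\rfloor$, whose value depends on the parity of $j$, forcing a split. If $j$ is odd, write $j+1=2m$ with $m=(j+1)/2$; then $n-j-1=(6a-b)-2m$, so the shift rule gives $\lfloor (n-j-1)/2\rfloor=3a-\lfloor (b+1)/2\rfloor-m$, the two copies of $\lfloor (b+1)/2\rfloor$ cancel against the one coming from $\lfloor n/2\rfloor$, and \eqref{N3b} collapses to $N(j,3,n)=m-\lfloor (6a-b-1)/3\rfloor-1$; the second identity then yields $(j+1)/2-2a$ for $b\in\{0,1,2\}$ and $(j+1)/2-2a+1$ for $b\in\{3,4,5\}$, which is \eqref{cor-N3b-odd}. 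If $j$ is even, write $j=2m'$ with $m'=j/2$; then $n-j-1=(6a-b-1)-2m'$ has parity \emph{opposite} to that of $n$, and the shift rule gives $\lfloor (n-j-1)/2\rfloor=3a+\lfloor (-b-1)/2\rfloor-m'$, so \eqref{N3b} becomes $N(j,3,n)=\varepsilon_b+m'-\lfloor (6a-b-1)/3\rfloor-1$, where $\varepsilon_b:=-\lfloor (b+1)/2\rfloor-\lfloor (-b-1)/2\rfloor$ equals $1$ when $b$ is even and $0$ when $b$ is odd. Substituting $m'=j/2$ and the appropriate value of $\lfloor (6a-b-1)/3\rfloor$ for each $b\in\{0,\dots,5\}$ reproduces \eqref{cor-N3b-even} case by case.

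I do not anticipate any real obstacle: the statement is just a repackaging of Theorem~\ref{thm3} into residue classes, and the proof is a finite, routine case check. The two points that call for mild care are the range-matching just mentioned and, in the even-$j$ case, keeping track of the parity flip of $n-j-1$ relative to $n$ — this is precisely what produces the extra correction $\varepsilon_b$, and hence why \eqref{cor-N3b-even} has a three-way split into $b=1$, $b\in\{0,2,3,5\}$, $b=4$ whereas \eqref{cor-N3b-odd} needs only a two-way split.
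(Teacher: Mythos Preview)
Your proposal is correct and is exactly the approach the paper intends: the paper gives no detailed proof of this corollary, merely remarking that one arrives at it by ``separating $n$ into residue classes modulo $6$,'' which is precisely the substitution-and-case-check you carry out. Your bookkeeping identities and the parity split on $j$ (with the $\varepsilon_b$ correction in the even case) are all accurate, and your observation that the range $j+1\le n\le\lfloor 3j/2\rfloor$ sits inside the range actually established in the proof of Theorem~\ref{thm3} is a nice bit of care.
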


As we remarked in the introduction, the identities in Theorem \ref{thm1} and Theorem \ref{thm2} can be indeed followed by Corollary \ref{cor-thm3}. For example, the identity \eqref{one2}, namely,
$$
p(4\ell-1,3,6\ell-3)-p(4\ell-1,3,6\ell-4)=1
$$
is equivalent to the fact $N(4\ell-1, 3, 6\ell-3)=1$. Therefore we set $j=4\ell-1$, $a=\ell$ and $b=3$ in \eqref{cor-N3b-odd} which gives in turn
\begin{align*}
N(4\ell-1, 3, 6\ell-3)=\frac{(4\ell-1)+1}{2}-2\ell+1=1.
\end{align*}

Using the formula for $N(j, 3, n)$, one can build a formula for $N(j, 4, n)$ in terms of values for $N(j, 3, n)$ in a recursive way:

\begin{thm} \label{thm4}
Let $j$ and $n$ be non-negative integers such that $0\leq n\leq 2j$. For $0\leq n\leq j$, we obtain that 
\begin{equation}\label{N4a}
N(j,4,n)=\sum_{s=0}^{\lfloor \frac{n}{4} \rfloor} N(j,3,n-4s).
\end{equation}
Similarly, for $j<n\leq 2j$, one has that
\begin{equation}\label{N4b}
N(j, 4, n)=\sum_{s=0}^{\lfloor \frac{n}{4} \rfloor} N(n,3,n-4s)-\sum_{s=0}^{n-j-1}N(n,3,s). 
\end{equation}                             
\end{thm}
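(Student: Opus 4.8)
The plan is to reduce the case $k=4$ to the already-solved case $k=3$ by means of two ``peeling'' bijections on Young diagrams, and then to substitute the explicit formula of Theorem~\ref{thm3}. Write $P(j,4,n)$ for the set of partitions of $n$ into at most $4$ parts of size at most $j$, and let $P_3(\ell)$ denote the number of partitions of $\ell$ into at most $3$ parts (so $P_3(\ell)=p(m,3,\ell)$ for any $m\ge\ell$). The first ingredient is the transfer identity
$$p(j,4,n)=\sum_{c\ge 0}p(j-c,\,3,\,n-4c),$$
proved by sorting each $\lambda=(\lambda_1,\lambda_2,\lambda_3,\lambda_4)\in P(j,4,n)$ by its smallest part $\lambda_4=c$ and subtracting $c$ from every part: what remains is a partition of $n-4c$ into at most $3$ parts of size at most $j-c$, and adding $c$ back inverts the map. (Equivalently this is the Gaussian-binomial recursion $\binom{j+4}{4}_q=\binom{j+3}{3}_q\cdot\frac{1-q^{j+4}}{1-q^{4}}$, or the Pascal rule $N(j,4,n)=N(j,3,n)+N(j-1,4,n-4)$ iterated.) Subtracting this identity at $n$ and at $n-1$ and using $N(j,k,m)=p(j,k,m)-p(j,k,m-1)$ gives the recursion
$$N(j,4,n)=\sum_{c=0}^{\lfloor n/4\rfloor}N(j-c,3,n-4c),\qquad 0\le n\le 2j$$
(in this range $j-c\ge\lceil j/2\rceil$, so no degenerate term occurs and the sum is finite).

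For $0\le n\le j$ this already yields \eqref{N4a}: for every $c$ in the sum one has $0\le n-4c\le j-4c\le j-c\le j$, so $n-4c$ sits at or below both caps $j-c$ and $j$, and the first case of Theorem~\ref{thm3} gives $N(j-c,3,n-4c)=\lfloor(n-4c)/2\rfloor-\lfloor(n-4c-1)/3\rfloor=N(j,3,n-4c)$; substituting turns the recursion into $\sum_{c=0}^{\lfloor n/4\rfloor}N(j,3,n-4c)$.

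For $j<n\le 2j$ I pass through the parameter value $n$. Applying \eqref{N4a} with $j$ replaced by $n$ (legitimate since $0\le n\le n$) gives $N(n,4,n)=\sum_{s=0}^{\lfloor n/4\rfloor}N(n,3,n-4s)$; and since $N(n,3,m)=p(n,3,m)-p(n,3,m-1)$ telescopes, $\sum_{s=0}^{n-j-1}N(n,3,s)=p(n,3,n-j-1)$, which because $n-j-1<n$ is simply $P_3(n-j-1)$. Hence the right-hand side of \eqref{N4b} equals $N(n,4,n)-P_3(n-j-1)$, and the whole theorem comes down to the identity
$$N(n,4,n)-N(j,4,n)=P_3(n-j-1),\qquad j<n\le 2j.$$
To prove it, observe that for $m\le n$ the cap ``largest part $\le n$'' is vacuous, so for $m\in\{n-1,n\}$ the difference $p(n,4,m)-p(j,4,m)$ counts exactly the partitions of $m$ into at most $4$ parts with $\lambda_1\ge j+1$. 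Here comes the second peeling bijection: subtract $j+1$ from the largest part, $\lambda\mapsto\bigl(\lambda_1-j-1;\,(\lambda_2,\lambda_3,\lambda_4)\bigr)$. This is a bijection onto the set of pairs $(a,\mu)$ with $a\in\ZZ_{\ge 0}$, $\mu$ a partition into at most $3$ parts, and $a+|\mu|=m-j-1$ — the reconstructed $4$-tuple is automatically weakly decreasing because $n\le 2j$ forces $|\mu|\le m-j-1\le j-1<j+1\le\lambda_1$. Therefore $p(n,4,m)-p(j,4,m)=\sum_{\ell=0}^{m-j-1}P_3(\ell)$, and subtracting the $m=n-1$ case from the $m=n$ case leaves exactly $P_3(n-j-1)$.

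So no heavy computation is needed; the step that requires care is this second bijection, and specifically the role of the hypothesis $n\le 2j$: it is precisely this bound that, after removing $j+1$ boxes from the leading part, keeps the surviving parts small enough to sit below the new leading part, so that the target set of pairs is genuinely unconstrained and the count collapses to $P_3(n-j-1)$. One must also keep track of the boundary conventions — $N(\cdot,3,0)=1$, $p(\cdot,3,m)=0$ for $m<0$, and the empty sums occurring at $n=j+1$ — but these agree with the conventions fixed in the paper and cause no difficulty.
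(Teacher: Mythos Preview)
Your argument is correct and follows essentially the same route as the paper: the ``peel off the smallest part'' bijection gives the recursion $N(j,4,n)=\sum_{c}N(j-c,3,n-4c)$, and for $0\le n\le j$ you replace $N(j-c,3,n-4c)$ by $N(j,3,n-4c)$ using Theorem~\ref{thm3} exactly as the paper does. For $j<n\le 2j$ the paper also reduces to showing $N(j,4,n)=N(n,4,n)-\sum_{s=0}^{n-j-1}N(n,3,s)$, but it does so by sorting partitions according to the \emph{exact value} $\lambda_1=s$ (removing the entire first row to get a partition counted by $p(s,3,n-s)$), whereas you peel only $j{+}1$ boxes from $\lambda_1$ and then telescope; after the change of variable $s\mapsto n-s$ and the observation that the cap is vacuous, the two computations coincide term by term, so the difference is one of packaging rather than of idea.
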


\begin{proof}
Suppose that $0\leq n\leq 2j$ and let $s$ be an integer with $0\leq s \leq \lfloor \frac{n}{4} \rfloor$. Then note that the number of partitions of $n$ into at most $4$ parts, having the smallest part $s$ and largest part at most $j$ is equal to $p(j-s,3,n-4s)$. One can easily check this from the Ferrers diagram: Consider Ferrers diagram for a partition $\lambda=(\lambda_1, \lambda_2, \lambda_3)$ of $n-4s$ into at most $3$ parts, having largest part at most $j-s$. In other words, $j-s\geq \lambda_1\geq \lambda_2\geq \lambda_3\geq 0$. One then obtains a partition $\lambda'=(\lambda_1+s, \lambda_2+s, \lambda_3+s, s)$ by adding $s$ dots in the first three rows and adding the fourth row with $s$ dots. Then clearly $\lambda'$ is a partition of $n$  into at most $4$ parts, having the smallest part $s$ and largest part at most $j$. Therefore we obtain the following recursion formula:
$$N(j,4,n)=\sum_{s=0}^{\lfloor \frac{n}{4} \rfloor} N(j-s,3,n-4s).$$

Now, if $0\leq n \leq j$ then $j-s\geq n-4s$ for all  $0\leq s \leq \lfloor \frac{n}{4} \rfloor$. Therefore from \eqref{N3a}, one knows that 
$$N(j-s,3,n-4s)=N(j,3,n-4s)$$
for all $0\leq s \leq \lfloor \frac{n}{4} \rfloor$. Hence $N(j,4,n)$ can be written
$$N(j,4,n)=\sum_{s=0}^{\lfloor \frac{n}{4} \rfloor} N(j,3,n-4s),$$which is the desired recurrence relation \eqref{N4a} for the first case.

To prove the second case \eqref{N4b}, assume that $j<n\leq 2j$. Note that the number of partitions of $n$ into at most $4$ parts, having largest part $s$ is equal to $p(s,3,n-s)$. As before, this is clear from the Ferrers diagram: Consider Ferrers diagram of a partition $\lambda=(s, \lambda_2, \lambda_3, \lambda_4)$ of $n$ into at most $4$ parts, having largest part $s$. Remove the first row, namely the largest part $s$. Then the resulting partition $\lambda'=(\lambda_2, \lambda_3, \lambda_4)$ will be a partition of $n-s$ into at most 3 parts, having largest part at most $s$.

Since the number of partitions of $n$ into at most $4$ parts is equal to $p(n,4,n)$, for $j<n\leq 2j$, we have that
\begin{align}
N(j,4,n)&=N(n,4,n)-\sum_{s=j+1}^{n}N(s,3,n-s)\nonumber\\
        &=N(n,4,n)-\sum_{s=j+1}^{n}N(n,3,n-s),\label{N4b-eqn}
\end{align}
where in \eqref{N4b-eqn}, we used the fact $N(s, 3, n-s)=N(n, 3, n-s)$ by \eqref{N3a}, since $n-s\leq s\leq n$ for all $j+1 \leq s \leq n$. By changing variable $s$ to $n-s$, we can rewrite \eqref{N4b-eqn} as
\begin{equation}\label{N4b-eqn-1}
N(j,4,n)=N(n,4,n)-\sum_{s=0}^{n-j-1}N(n,3,s),
\end{equation}which completes our proof of \eqref{N4b}, after substituting $\sum_{s=0}^{\lfloor \frac{n}{4} \rfloor} N(n,3,n-4s)$ for $N(n, 4, n)$ by using \eqref{N4a}.
\end{proof}

\begin{cor} \label{cor-thm4a}
Let $j$ and $n$ be non-negative integers such that $0\leq n\leq 2j$. For $0\leq n\leq j$, we have that 
\begin{equation}\label{N4a-3a}
N(j,4,n)=\sum_{s=0}^{\lfloor \frac{n}{4} \rfloor} \Big(\Bigl\lfloor \frac{n-4s}{2}\Bigr\rfloor - \Bigl\lfloor \frac{n-4s-1}{3}\Bigr\rfloor\Big).
\end{equation}
Similarly, for $j<n\leq 2j$, one has that
\begin{equation}\label{N4b-3b}
N(j, 4, n)=\sum_{s=0}^{\lfloor \frac{n}{4} \rfloor} \Big(\Bigl\lfloor \frac{n-4s}{2}\Bigr\rfloor - \Bigl\lfloor \frac{n-4s-1}{3}\Bigr\rfloor\Big) -\sum_{s=0}^{n-j-1} \Big(\Bigl\lfloor \frac{s}{2}\Bigr\rfloor - \Bigl\lfloor \frac{s-1}{3}\Bigr\rfloor\Big). 
\end{equation}
\end{cor}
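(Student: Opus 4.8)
The plan is to deduce Corollary \ref{cor-thm4a} directly from Theorem \ref{thm4} and the explicit formula of Theorem \ref{thm3}, by substituting \eqref{N3a} into each of the sums \eqref{N4a} and \eqref{N4b}. Since Theorem \ref{thm4} has already carried out the combinatorial bookkeeping (via the Ferrers diagram arguments) and Theorem \ref{thm3} supplies the closed form, this is essentially a routine substitution; the only point requiring care is that Theorem \ref{thm3} splits into two cases, so one must check that in every term appearing in the recursions of Theorem \ref{thm4} the third index does not exceed the first index, which is precisely the regime governed by the simpler formula \eqref{N3a} rather than \eqref{N3b}.

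First I would handle $0 \le n \le j$. Starting from the recursion \eqref{N4a}, namely $N(j,4,n)=\sum_{s=0}^{\lfloor n/4\rfloor} N(j,3,n-4s)$, I note that for every $s$ in the summation range one has $n-4s \le n \le j$, so \eqref{N3a} applies and $N(j,3,n-4s)=\lfloor (n-4s)/2\rfloor-\lfloor (n-4s-1)/3\rfloor$. Substituting term by term yields \eqref{N4a-3a}.

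Next I would treat $j<n\le 2j$, starting from \eqref{N4b}, $N(j,4,n)=\sum_{s=0}^{\lfloor n/4\rfloor} N(n,3,n-4s)-\sum_{s=0}^{n-j-1}N(n,3,s)$. Here the first argument of every occurrence of $N(\cdot,3,\cdot)$ equals $n$. In the first sum the third argument is $n-4s\le n$ (with equality only at $s=0$), and in the second sum it is $s\le n-j-1<n$; thus in both sums the third index is at most the first index, so \eqref{N3a} applies to every term. Replacing $N(n,3,m)$ by $\lfloor m/2\rfloor-\lfloor (m-1)/3\rfloor$ with $m=n-4s$ in the first sum and $m=s$ in the second produces \eqref{N4b-3b}.

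The ``main obstacle'' is thus only the elementary range check just described, which presents no genuine difficulty. I would also remark, for context, that the expressions in \eqref{N4a-3a} and \eqref{N4b-3b} admit further simplification by reducing $n$ into residue classes modulo $12$ (the least common multiple of the periods $2$, $3$, and $4$), analogous to what Corollary \ref{cor-thm3} does for $N(j,3,n)$; but this refinement is not needed for the statement as recorded.
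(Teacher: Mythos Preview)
Your proposal is correct and matches the paper's own proof, which likewise observes that in every term of the sums from Theorem \ref{thm4} the third argument of $N(\cdot,3,\cdot)$ does not exceed the first, so that the single formula \eqref{N3a} applies and direct substitution yields the result. Your write-up is in fact more explicit about the range checks than the paper's one-line justification.
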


\begin{proof}
Recall the fact \eqref{N3a} which states that
$$
N(w, 3, s)=\Bigl\lfloor \frac{s}{2}\Bigr\rfloor - \Bigl\lfloor \frac{s-1}{3}\Bigr\rfloor
$$
as long as $s\leq w$. Hence the corollary follows simply from evaluating the values in Theorem \ref{thm4} by using the results in Theorem \ref{thm3}. 
\end{proof}

Similar to the values for $N(j, 3, n)$ in Corollary \ref{cor-thm3}, we found an expression for $N(j, 4, n)$ in terms of the residue classes of $n$ modulo $12$. However, we record it only for the case $0\leq n\leq j$ here. We could only find a recursive and complicated formula for $N(j, 4, n)$ for $j<n\leq 2j$.

\begin{cor} \label{cor-thm4b}
Let $j$ and $n$ be non-negative integers such that $0\leq n\leq j$. If $n$ is even, namely
$$n=12a+2b, \quad b \in \{0,1,2,3,4,5\},$$one has that
\begin{equation}\label{cor-N4a-even}
N(j,4,n)=\begin{cases} 
3a(a+1)+1 & \text{ if } \,\,b=0,\\
(3a+b)(a+1) & \text{ if } \,\,b=1,2,3,4,5.
\end{cases}
\end{equation}
For odd $n$, one can compute $N(j, 4, n)$ by the following relation 
\begin{equation}\label{cor-N4a-odd}
N(j,4,n)=N(j,4,n-3)
\end{equation}together with \eqref{cor-N4a-even}.
\end{cor}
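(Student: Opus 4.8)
The plan is to recognise $N(j,4,n)$, for $0\leq n\leq j$, as a representation count and then evaluate that count on residue classes. The expression $\lfloor m/2\rfloor-\lfloor(m-1)/3\rfloor$ is precisely the number of pairs $(x,y)\in\ZZ_{\geq 0}^2$ with $2x+3y=m$ (for each admissible value of $y$ there is a unique $x$, and counting the admissible $y$ yields exactly this difference of floors), so the formula \eqref{N4a-3a} of Corollary~\ref{cor-thm4a} says that
\begin{equation*}
N(j,4,n)=\#\bigl\{(x,y,z)\in\ZZ_{\geq 0}^3\ :\ 2x+3y+4z=n\bigr\}\qquad(0\leq n\leq j),
\end{equation*}
the summation index $s$ in \eqref{N4a-3a} playing the role of $z$; equivalently, $N(j,4,n)$ is the coefficient of $q^n$ in $\bigl((1-q^2)(1-q^3)(1-q^4)\bigr)^{-1}$, which one may also read off from $(1-q)\left[\begin{smallmatrix}j+4\\4\end{smallmatrix}\right]_q$ since that polynomial agrees with $\bigl((1-q^2)(1-q^3)(1-q^4)\bigr)^{-1}$ up to terms of degree $>j$. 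In particular $n\mapsto N(j,4,n)$ is a quasi-polynomial of period $\mathrm{lcm}(2,3,4)=12$ on the range $0\leq n\leq j$, so the corollary is simply its evaluation on each class modulo $12$.

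For the reduction \eqref{cor-N4a-odd} I would argue as follows. Let $n$ be odd with $n\geq 3$ (the case $n=1$ is trivial, both sides being $0$). In any representation $2x+3y+4z=n$ the number $2x+4z$ is even, hence $3y$, and therefore $y$, is odd, so $y\geq 1$. The map $(x,y,z)\mapsto(x,y-1,z)$ is then a bijection from the representations of $n$ onto the representations $2x+3y'+4z=n-3$ with $y'$ even. But $n-3$ is even, so in \emph{every} representation of $n-3$ the integer $3y'=n-3-2x-4z$ is even, i.e.\ $y'$ is automatically even; hence the image is all of them, and $N(j,4,n)=N(j,4,n-3)$ (note $n-3\leq j$, so the identification above is legitimate on both sides).

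For the even case \eqref{cor-N4a-even} I would write $n=12a+2b$ with $b\in\{0,1,2,3,4,5\}$. By the parity observation above, every representation of the even number $n$ has $y=2y'$ even; substituting $y=2y'$ and dividing by $2$ turns $2x+3y+4z=n$ into $x+2z+3y'=6a+b$, so
\begin{equation*}
N(j,4,n)=\#\bigl\{(x,y',z)\in\ZZ_{\geq 0}^3\ :\ x+2z+3y'=6a+b\bigr\},
\end{equation*}
the number of partitions of $6a+b$ into parts of size at most $3$. This classical quantity equals the nearest integer to $(6a+b+3)^2/12$; evaluating it for each $b$ — or, without quoting the classical formula, computing $\sum_{y'=0}^{\lfloor(6a+b)/3\rfloor}\bigl(\lfloor(6a+b-3y')/2\rfloor+1\bigr)$ by splitting the index $y'$ according to its parity to clear the inner floor — yields $3a(a+1)+1$ when $b=0$ and $(3a+b)(a+1)$ when $b\in\{1,2,3,4,5\}$, which is exactly \eqref{cor-N4a-even}.

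The only genuinely computational step is this last evaluation: resolving the nested floor functions and confirming that the answer collapses to a single closed form, uniform in $a$, for each of the six residues $b$ modulo $12$. That is elementary bookkeeping — cleanest if one cites the classical count of partitions into parts $\leq 3$ — and is the place where one must watch for off-by-one shifts; every other step is bijective and immediate.
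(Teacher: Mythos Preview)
Your argument is correct, but it follows a genuinely different route from the paper's. The paper works directly with the recursion \eqref{N4a}, evaluating $\sum_{s}N(j,3,n-4s)$ by inserting the residue-class values of $N(j,3,\cdot)$ from Corollary~\ref{cor-thm3}; it computes $N(j,4,12a)$ and $N(j,4,12a+2)$ from scratch, then pushes to the remaining even residues via the recursion $N(j,4,n+4)=N(j,3,n+4)+N(j,4,n)$, and for odd $n$ it reduces \eqref{cor-N4a-odd} to the identity $N(j,3,n)=N(j,3,n-3)$, checked case by case from \eqref{cor-N3a}. You instead reinterpret $N(j,4,n)$ for $n\le j$ as the number of representations $2x+3y+4z=n$ (equivalently, the coefficient of $q^n$ in $\bigl((1-q^2)(1-q^3)(1-q^4)\bigr)^{-1}$) and argue bijectively: for odd $n$ the parity of $y$ forces $y\ge 1$ and $y\mapsto y-1$ gives \eqref{cor-N4a-odd} in one stroke; for even $n$ the substitution $y=2y'$ halves the problem to partitions of $6a+b$ into parts $\le 3$, whose count you take from the classical nearest-integer formula. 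Your approach is more conceptual --- it explains the period $12=\mathrm{lcm}(2,3,4)$ and dispatches the odd case with a single bijection --- while the paper's stays entirely within the formulas it has already built and needs no external input. One small remark: your parenthetical justification that ``counting the admissible $y$ yields exactly this difference of floors'' is not a derivation on its own; the cleanest way to secure $\lfloor m/2\rfloor-\lfloor(m-1)/3\rfloor=\#\{2x+3y=m\}$ is either the generating-function comparison you mention next, or a one-period check modulo~$6$.
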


\begin{proof}
Assume that $0\leq n \leq j$. For even $n$, let
$$
n=12a+2b, \quad b\in \{0, 1, 2, 3, 4, 5\}.
$$ 
To derive \eqref{cor-N4a-even}, we have to compute them case by case. If $n=12a$, then by 
\begin{align*}
N(j,4,12a)=\sum_{s=0}^{3a} N(j,3,12a-4s)=\sum_{s=0}^{3a} N(j,3,4(3a-s))= \sum_{s=0}^{3a} N(j,3,4s),
\end{align*}
where we apply \eqref{N4a} and change the variable $s$ to $3a-s$ in the first and the last equality, respectively. Moreover, one has that
\begin{align*}
 \sum_{s=0}^{3a} N(j,3,4s) =& \sum_{s=0}^{a}N(j,3,12s)+\sum_{s=0}^{a-1}N(j,3,12s+4)+\sum_{s=0}^{a-1}N(j,3,12s+8)\\
          =&\sum_{s=0}^{a} (2s+1)+\sum_{s=0}^{a-1}(2s+1)+\sum_{s=0}^{a-1}(2s+2)\\
          =&3a^2+3a+1=3a(a+1)+1,
\end{align*}where in the second equality we employ \eqref{cor-N3a}. This implies our first case \eqref{cor-N4a-even}.

Similarly, we have that $N(j,4,12a+2)=(3a+1)(a+1)$. To prove the remaining cases, one uses the following useful fact:
\begin{align}
N(j, 4, n+4)=&\sum_{s=0}^{\lfloor \frac{n+1}{4} \rfloor} N(n,3,n+4-4s)\nonumber\\
=&N(j, 3, n+4)+\sum_{s=1}^{\lfloor \frac{n}{4}+1 \rfloor} N(n,3,n-4(s-1))\nonumber\\
=&N(j, 3, n+4)+\sum_{s=0}^{\lfloor \frac{n}{4}\rfloor} N(n,3,n-4s)\nonumber\\
=&N(j, 3, n+4)+N(j, 4, n),\label{claim}
\end{align}
where we employ \eqref{N4a}.
Therefore \eqref{claim} provides the recursive expression for $N(j, 4, n)$ in terms of $N(j, 3, n)$ and $N(j, 4, n-4)$, where $n=12a+2b$ with $b\in \{ 2,3,4,5\}$. Moreover one knows the values of $N(j, 3, 12a+2b)$ from \eqref{cor-N3a}. This completes the proof for even $n$.

By \eqref{N4a}, one knows that the values for $N(j, 4, n)$ depend only on the values for $N(j, 3, n)$. Therefore proving \eqref{cor-N4a-odd} is equivalent to prove
\begin{equation}\label{N3odd}
N(j,3,n)=N(j,3,n-3)
\end{equation}
for odd $n$. However, \eqref{N3odd} can be easily verified by the results in Corollary \ref{cor-thm3}: Note that $N(j,3,6a+1)=N(j,3,6a-2)=a$. Also check that $N(j,3,6a+3)=N(j,3,6a)$ and $N(j,3,6a+5)=N(j,3,6a+2)$. Therefore \eqref{N3odd} indeed holds for odd $n$.
\end{proof}

\section{Application to explicit Satake inversion}\label{application}

The explicit multiplicity formula for $N(j, k, n)$ can be used to invert the Satake isomorphism for $\GL_2$ explicitly. Before we move on further, let us recall some basic properties of the Satake isomorphism. We restrict our attention to $\GL_2$ in this paper.

Let $F$ be a non-archimedean local field with ring of integers $\oo$ and let $K=\GL_2(\oo)\leq \GL_2(F)$. Let $\pp$ be the unique prime ideal of $\oo$ and fix a generator $\varpi$ of $\pp$. We let $q$ be the residue degree of $F$, so $q=|\oo/\pp|=|\varpi|^{-1}$. 

\begin{defn} 
An irreducible representation $(\pi,V)$ of
$\GL_2(F)$ is called \textbf{unramified} if $V^K\neq 0$.
\end{defn}

\noindent The space $C_c^{\infty}(\GL_2(F))$ of compactly supported locally constant functions is an algebra under convolutions.  The subalgebra of $K$-bi invariant functions
$$
C_c^{\infty}(\GL_2(F)// K) \leq C_c^\infty(\GL_2(F))
$$
is known as the \textbf{unramified Hecke algebra} of $\GL_2(F)$ (with respect to $K$).  Let $f \in C_c^\infty(\GL_2(F)// K)$ and let 
$\pi$ be unramified.  Then it is well-known that $\pi(f)$ acts via a scalar on $V^K$, namely $\mathrm{tr}\,\pi(f)$.  The map
\begin{align*}
C_c^{\infty}(\GL_2(F)// K) &\lto \CC\\
f &\longmapsto \mathrm{tr}\,\pi(f)
\end{align*}
is called the \textbf{Hecke character} of $\pi$. An unramified representation $\pi$ is determined up to isomorphism by its Hecke character.

Then the Satake isomorphism takes the following form:

\begin{thm}[Satake]  
There is an isomorphism of algebras
\begin{equation}\label{Satake}
\mathcal{S}:C_c^\infty(\GL_2(F)//K)\lto \CC[t_1^{\pm 1}, t_2^{\pm1}]^{S_2}
\end{equation}where $S_2$ is the symmetric group of order $2$ and it acts via switching $t_1$ and $t_2$.
\end{thm}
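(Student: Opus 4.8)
The plan is to realize $\mathcal S$ explicitly as a twisted integration over a maximal unipotent subgroup, and then to prove it is an isomorphism by a triangularity argument in the dominance order on coweights. Fix notation: let $T\subset\GL_2$ be the diagonal torus, $B=TN$ the upper triangular Borel with unipotent radical $N$, and $W=S_2$, whose nontrivial element $w_0$ (the permutation matrix) acts on $T$ by swapping the two diagonal entries. Normalize Haar measures so that $\mathrm{vol}(K)=\mathrm{vol}(N\cap K)=1$, write $\varpi^{\lambda}=\mathrm{diag}(\varpi^{\lambda_1},\varpi^{\lambda_2})$ for $\lambda=(\lambda_1,\lambda_2)\in\ZZ^2\cong X_*(T)$, and identify $\CC[X_*(T)]\cong\CC[t_1^{\pm1},t_2^{\pm1}]$ by $e^{\lambda}\mapsto t_1^{\lambda_1}t_2^{\lambda_2}$, so that $\CC[X_*(T)]^W=\CC[t_1^{\pm1},t_2^{\pm1}]^{S_2}$. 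Define
\[
\mathcal S(f)=\sum_{\lambda\in X_*(T)}\Bigl(\delta_B^{1/2}(\varpi^{\lambda})\int_N f(\varpi^{\lambda}n)\,dn\Bigr)e^{\lambda},\qquad \delta_B(\mathrm{diag}(a,d))=|a/d|.
\]
Since $f$ has compact support and is $K$-bi-invariant, only finitely many terms are nonzero, so $\mathcal S(f)\in\CC[X_*(T)]$ (square roots of $q$ are harmless, the target being a $\CC$-algebra).

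Next I would set up the two relevant bases. By the Cartan (elementary-divisor) decomposition $\GL_2(F)=\bigsqcup_{\lambda_1\geq\lambda_2}K\varpi^{\lambda}K$, so the functions $c_{\lambda}:=\one_{K\varpi^{\lambda}K}$ indexed by dominant coweights $\lambda=(\lambda_1\geq\lambda_2)$ form a $\CC$-basis of $C_c^{\infty}(\GL_2(F)//K)$; on the other side the orbit sums $m_{\lambda}=\sum_{\mu\in W\lambda}e^{\mu}$, indexed by the same set, form a $\CC$-basis of $\CC[t_1^{\pm1},t_2^{\pm1}]^{S_2}$. Then I would verify the structural properties of $\mathcal S$. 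First, $\mathcal S$ is an algebra homomorphism: expanding $\int_N(f_1*f_2)(tn)\,dn$, substituting the Iwasawa decomposition $\GL_2(F)=NTK$ for the convolution variable, using $K$-invariance of $f_2$ and unimodularity of $T$, and applying Fubini reduces it to the product of the corresponding integrals for $f_1$ and $f_2$; the $\delta_B^{1/2}$ twist is exactly what makes the Jacobian factors cancel. Second, $\mathcal S(f)$ is $W$-invariant: this is the rank-one Gindikin--Karpelevich computation, which for $\GL_2$ collapses, via the identity $\left(\begin{smallmatrix}1&0\\c&1\end{smallmatrix}\right)=\left(\begin{smallmatrix}1&c^{-1}\\0&1\end{smallmatrix}\right)\mathrm{diag}(-c^{-1},c)\,w_0\left(\begin{smallmatrix}1&c^{-1}\\0&1\end{smallmatrix}\right)$ valid for $c\neq0$, to a change of variables on $F$ equating the coefficients of $e^{\lambda}$ and of $e^{w_0\lambda}$ in $\mathcal S(f)$. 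Together these give an algebra map $\mathcal S\colon C_c^{\infty}(\GL_2(F)//K)\to\CC[t_1^{\pm1},t_2^{\pm1}]^{S_2}$.

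To conclude I would compute $\mathcal S(c_{\lambda})$ and read off a triangular matrix. Decomposing $K\varpi^{\lambda}K$ into left $K$-cosets and projecting to $T$ via Iwasawa, one determines the $\mu$ for which $K\varpi^{\lambda}K\cap\varpi^{\mu}N\neq\emptyset$: since the determinant is fixed one gets $\mu_1+\mu_2=\lambda_1+\lambda_2$, and the elementary-divisor condition forces $\mu\leq\lambda$ in the dominance order; moreover $K\varpi^{\lambda}K\cap\varpi^{\lambda}N=(N\cap K)\varpi^{\lambda}$, so that $\int_N c_{\lambda}(\varpi^{\lambda}n)\,dn=\mathrm{vol}\bigl(\varpi^{-\lambda}(N\cap K)\varpi^{\lambda}\bigr)=\delta_B(\varpi^{\lambda})^{-1}$ and the coefficient of $e^{\lambda}$ in $\mathcal S(c_{\lambda})$ equals $\delta_B^{1/2}(\varpi^{\lambda})^{-1}=q^{\langle\rho,\lambda\rangle}$, where $\rho$ is the half-sum of positive roots. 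Hence
\[
\mathcal S(c_{\lambda})=q^{\langle\rho,\lambda\rangle}\,m_{\lambda}+\sum_{\mu<\lambda}a_{\lambda\mu}\,m_{\mu},\qquad q^{\langle\rho,\lambda\rangle}=q^{(\lambda_1-\lambda_2)/2}\neq0,
\]
the sum being over dominant $\mu$ strictly below $\lambda$ with $a_{\lambda\mu}\in\CC$. The dominance order restricts to a well-ordered chain on the dominant coweights with a fixed value of $\lambda_1+\lambda_2$, so the matrix of $\mathcal S$ in the bases $\{c_{\lambda}\}$, $\{m_{\lambda}\}$ is block-diagonal (blocks indexed by $\lambda_1+\lambda_2$), each block lower-triangular with nonzero diagonal entries, hence invertible. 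Therefore $\mathcal S$ is bijective, which proves the theorem.

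The main obstacle is the leading-term analysis of $\mathcal S(c_{\lambda})$: pinning down exactly which $\mu$ occur, checking each is $\leq\lambda$, and evaluating the $\mu=\lambda$ stratum so that the diagonal coefficient is the nonzero number $q^{\langle\rho,\lambda\rangle}$. For $\GL_2$ this is an explicit but somewhat delicate manipulation of $2\times2$ matrices over $\oo$ via Smith normal form. The only other non-formal ingredient is the $W$-invariance --- the classical rank-one integral identity; everything else (well-definedness, the Cartan and Iwasawa decompositions, the homomorphism property) is routine.
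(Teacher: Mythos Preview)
The paper does not prove this theorem at all: it is stated as a classical result attributed to Satake and is used as a black box, with references to Bump for explicit formulas. There is therefore no ``paper's own proof'' to compare against.

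That said, your proposal is the standard proof of the Satake isomorphism (essentially Satake's original argument, as exposed for instance in Cartier's Corvallis article or Gross's ``On the Satake isomorphism''), specialized to $\GL_2$. The definition of $\mathcal S$ as the $\delta_B^{1/2}$-twisted constant term, the homomorphism property via Iwasawa decomposition, the $W$-invariance via the rank-one intertwining computation, and bijectivity from upper-triangularity of $\mathcal S(c_\lambda)$ in the dominance order with leading coefficient $q^{\langle\rho,\lambda\rangle}$ --- all of this is correct and is exactly how one proves the result. One small quibble: in your leading-term computation you want $\int_N c_\lambda(\varpi^\lambda n)\,dn=\mathrm{vol}(\varpi^{-\lambda}(N\cap K)\varpi^{\lambda})=\delta_B(\varpi^\lambda)^{-1}$, which after multiplying by $\delta_B^{1/2}(\varpi^\lambda)$ gives $\delta_B^{-1/2}(\varpi^\lambda)=q^{(\lambda_1-\lambda_2)/2}$; you have written this correctly but the sign conventions are easy to slip on, so it is worth double-checking against the $\GL_2$ case in \cite{Bump}. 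Otherwise the sketch is sound.
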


One can enlarge the domain of the Satake isomorphim to obtain an algebra isomorphism
$$
C^{\infty}_{ac}(\GL_2(F)//\GL_2(\oo))\lto \CC[[ t_1^{\pm 1}, t_2^{\pm 1}]]^{S_2}.
$$
Here the subscript $ac$ denotes the space of functions that are almost compactly supported, in other words, when restricted to a subset of $\GL_2(F)$ with determinant lying in a compact subset of $F^\times$ they are compactly supported.

In applications, the following problem often arises:

\begin{prob}\label{prob:extend}
Given a ``natural" power series in $t_1^{\pm 1}, t_2^{\pm 1}$, give $\mathcal{S}^{-1}$ of it explicitly.
\end{prob}

Let 
\begin{equation}\label{LSymk}
\mathbb{L}(\Sym^k)=\sum_{j=0}^{\infty}\mathcal{S}^{-1}\big(\Sym^j(\Sym^k(t_1,t_2))\big).
\end{equation}
Then $\mathbb{L}(\Sym^k)\in C^{\infty}_{ac}(\GL_2(F)//K)$. We refer the readers to \cite[\S 5]{Getz}, for example, for details in a more general setting.

If $\pi$ is an unramified admissible representation of $\GL_2(F)$, then $\mathbb{L}(\Sym^k)$ gives the symmetric $k$th powers $L$-function for $\GL_2$ in the following way:
\begin{equation}\label{Lfunction}
\mathrm{tr}\pi_s(\mathbb{L}(\Sym^k))=L(s, \pi, \Sym^k)
\end{equation}
for $\mathrm{Re}(s)$ large enough.  Here
$$
|\det |^s\pi:=\pi_s
$$for complex $s$ (see \cite{Getz} for details, for example). The function $\mathbb{L}(\Sym^k)$ is refered to as the basic function attached to the symmetric $k$th power $L$-function of $\GL_2$. One would like to compute $\mathbb{L}(\Sym^k)$ as explicit as possible. 

Using the plethysm decomposition \eqref{Pleth-eqn} and the fact that $\mathcal{S}$ is an algebra homomorphism, we can rewrite $\mathbb{L}(\Sym^k)$ in \eqref{LSymk} as
\begin{align}\label{LSymk-simple}
\mathbb{L}(\Sym^k)=\sum_{j=0}^{\infty}\sum_{n=0}^{\lfloor jk/2\rfloor}N(j, k, n)\mathcal{S}^{-1}\big(\Sym^{jk-2n}(t_1,t_2)\otimes (t_1t_2)^{jk-n}\big).
\end{align}So to compute $\mathbb{L}(\Sym^k)$, we will compute $N(j, k, n)$ and $\mathcal{S}^{-1}\big(\Sym^{jk-2n}(t_1,t_2)\otimes (t_1t_2)^{jk-n}\big)$. The later problem has an easy solution. 

To ease the notation, for any $m\geq 1$ and $i\in \ZZ$, we let
\begin{equation}\label{one-m-i}
\one_m:=\sum_{\substack{a\geq b\geq 0\\a+b=m}}\one_{K\Big(\begin{smallmatrix} \varpi^a&\\&\varpi^b\end{smallmatrix}\Big)K}\quad\text{and}\quad
\one_{i, i}=\one_{K\Big(\begin{smallmatrix} \varpi^i&\\&\varpi^i\end{smallmatrix}\Big)K}
\end{equation}for short. Here $\one_X$ is the usual characteristic function of $X$.

The following lemmea is well-known, but we explain here to derive it from standard references.

\begin{lem}\label{key}
With notation in \eqref{one-m-i}, one has that for any $m\geq 1$ and $i\in \ZZ$,
\begin{align}\label{induction}
\mathcal{S}^{-1}\big(\Sym^m(t_1,t_2)\otimes (t_1t_2)^i\big)=q^{-m/2}\one_m*\one_{i, i}.
\end{align}
\end{lem}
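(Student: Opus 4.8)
The plan is to reduce the identity to two standard facts about the unramified Hecke algebra of $\GL_2$: first, that $\one_{i,i}$ is central (indeed invertible) and corresponds under $\mathcal{S}$ to multiplication by $(t_1t_2)^i$; second, that $\one_m$ corresponds to a normalized version of the complete symmetric polynomial, which after rescaling by $q^{-m/2}$ becomes exactly $\Sym^m(t_1,t_2)$. Since $\mathcal{S}$ is an algebra isomorphism, the product $\one_m * \one_{i,i}$ maps to the product of the two images, and the claimed formula follows by applying $\mathcal{S}^{-1}$.

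First I would recall the explicit Satake image of the generators. The double coset $K\big(\begin{smallmatrix}\varpi&\\&1\end{smallmatrix}\big)K$ (the classical Hecke operator $T_\varpi$) has Satake transform $q^{1/2}(t_1+t_2)$, and more generally the standard computation (see e.g. Gross, or Macdonald's formula for $\GL_n$) gives
\begin{align*}
\mathcal{S}\big(\one_{K(\begin{smallmatrix}\varpi^a&\\&\varpi^b\end{smallmatrix})K}\big)
= q^{(a+b)/2}\Big(\text{Hall--Littlewood-type polynomial in }t_1,t_2\Big),
\end{align*}
and summing over all $a\geq b\geq 0$ with $a+b=m$ these combine, by the classical identity expressing $h_m = \sum_{a+b=m} \text{(monomial symmetric pieces, corrected by }q\text{)}$, to give $\mathcal{S}(\one_m) = q^{m/2}\, \Sym^m(t_1,t_2)$, where $\Sym^m(t_1,t_2) = \sum_{a+b=m, a,b\geq 0} t_1^a t_2^b$ is the character of the $m$-th symmetric power. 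Dividing, $\mathcal{S}(q^{-m/2}\one_m) = \Sym^m(t_1,t_2)$. For the central factor, $\big(\begin{smallmatrix}\varpi^i&\\&\varpi^i\end{smallmatrix}\big)$ is central in $\GL_2(F)$, so $K\big(\begin{smallmatrix}\varpi^i&\\&\varpi^i\end{smallmatrix}\big)K$ is a single coset and $\one_{i,i}$ is invertible in the (enlarged, almost-compactly-supported) Hecke algebra with $\mathcal{S}(\one_{i,i}) = (t_1t_2)^i$; this is immediate from the definition of the Satake transform on central elements.

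Combining, $\mathcal{S}(q^{-m/2}\one_m * \one_{i,i}) = \Sym^m(t_1,t_2)\otimes (t_1t_2)^i$, and applying $\mathcal{S}^{-1}$ gives \eqref{induction}. I would phrase the write-up so that the $i$-dependence is handled first (twisting by a central character shifts $\one_m$ to $\one_m * \one_{i,i}$ on the Hecke side and multiplies by $(t_1t_2)^i$ on the Satake side), reducing everything to the case $i=0$, i.e. to the single assertion $\mathcal{S}^{-1}(\Sym^m(t_1,t_2)) = q^{-m/2}\one_m$.

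The main obstacle is pinning down the exact normalization: the Satake transform as usually defined carries a modular-character twist $\delta^{1/2}$, and whether one lands on $q^{m/2}\Sym^m$ or some other power of $q$ depends on conventions (the placement of the $q^{(a+b)/2}$ factor, and whether $\one_m$ is the characteristic function of a union of cosets or is weighted by coset volumes). I would resolve this by checking the base case $m=1$ explicitly — computing $\mathcal{S}(\one_1) = \mathcal{S}(\one_{K(\begin{smallmatrix}\varpi&\\&1\end{smallmatrix})K})$ directly from the coset decomposition $K\big(\begin{smallmatrix}\varpi&\\&1\end{smallmatrix}\big)K = \bigsqcup_{c \bmod \varpi} \big(\begin{smallmatrix}\varpi&c\\&1\end{smallmatrix}\big)K \sqcup \big(\begin{smallmatrix}1&\\&\varpi\end{smallmatrix}\big)K$ (which has $q+1$ cosets) and matching against $q^{1/2}(t_1+t_2)$ — and then either citing the general Macdonald formula or running the short induction on $m$ via the multiplication rule $\one_1 * \one_m = (\text{const})\,\one_{m+1} + (\text{const})\,\one_{m-1}*\one_{1,1}$, which mirrors the recurrence $(t_1+t_2)\Sym^m = \Sym^{m+1} + t_1t_2\,\Sym^{m-1}$ for the symmetric-power characters. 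Once the base case fixes the constant, the rest is bookkeeping.
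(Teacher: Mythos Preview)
Your proposal is correct and follows essentially the same route as the paper: reduce via the algebra-homomorphism property of $\mathcal{S}$ to the two separate statements $\mathcal{S}(\one_{i,i})=(t_1t_2)^i$ and $\mathcal{S}(\one_m)=q^{m/2}\Sym^m(t_1,t_2)$, handle the first by centrality, and establish the second by induction on $m$ using the Hecke recurrence $\one_{m+1}=\one_1*\one_m-q\,\one_{1,1}*\one_{m-1}$ together with the character identity $(t_1+t_2)\Sym^m=\Sym^{m+1}+t_1t_2\,\Sym^{m-1}$. The only cosmetic difference is that the paper simply cites Bump for the base cases rather than computing the $m=1$ coset decomposition directly, and does not invoke Macdonald's formula.
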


\begin{proof}
Since $\mathcal{S}$ is an algebra homomorphism, it suffices to prove that
\begin{align}\label{goal}
q^{m/2}\mathcal{S}^{-1}(\mathrm{Sym}^m(t_1,t_2))=\one_m\quad \text{and}\quad\mathcal{S}^{-1}((t_1t_2)^i)=\one_{i,i}.
\end{align}
From \cite[Proposition 4.6.6]{Bump}, we first recall that 
\begin{align}\label{base}
q^{1/2}\mathcal{S}^{-1}(\mathrm{Sym}^1(t_1,t_2))=\one_1\quad \text{and}\quad\mathcal{S}^{-1}(t_1t_2)=\one_{1,1}.
\end{align}
Moreover, a simple calculation shows 
\begin{align}\label{i-fold}
\one_{i,i}=(\one_{1,1})^i,
\end{align}
where $(\one_{1,1})^i$ denotes the $i$th fold convolution.  Since $\mathcal{S}$ is an algebra homomorphism, $\mathcal{S}^{-1}((t_1t_2)^i)=\one_{i,i}$  .
  
To prove the first assertion in \eqref{goal}, one recalls from \cite[Proposition 4.6.4]{Bump} the following recurrence relation
\begin{align}\label{Hecke-rec}
\one_{m+1}=\one_1*\one_{m}-q\one_{1,1}*\one_{m-1}.
\end{align}
Using \eqref{Hecke-rec}, we prove the first assertion of \eqref{goal} by induction on $m$. The base step follows from \eqref{base}. Assume that $$q^{m/2}\mathcal{S}^{-1}(\mathrm{Sym}^m(t_1,t_2))=\one_m.$$
From \eqref{Hecke-rec} and the fact $\mathcal{S}$ is an algebra homomorphism, one has that
\begin{align*}
\one_{m+1}=&\one_1*\one_{m}-q\one_{1,1}*\one_{m-1}\\
=&
q^{\frac{1}{2}}\mathcal{S}^{-1}(\mathrm{Sym}^1(t_1,t_2))q^{\frac{m}{2}}\mathcal{S}^{-1}(\mathrm{Sym}^m(t_1,t_2))-q \mathcal{S}^{-1}(t_1t_2)q^{\frac{m-1}{2}}\mathcal{S}^{-1}(\mathrm{Sym}^{m-1}(t_1,t_2))\\
=&q^{(m+1)/2}\mathcal{S}^{-1}\big(\Sym^m(t_1, t_2)(t_1+t_2)-t_1t_2\Sym^{m-1}(t_1, t_2)\big)\\
=&q^{(m+1)/2}\mathcal{S}^{-1}(\Sym^{m+1}(t_1, t_2)),
\end{align*}where the last equality is a direct computation using the polynomial expression for
$$
\Sym^m(t_1, t_2)=\sum_{j=0}^m t_1^{m-j}t_2^j.
$$
This completes our induction process and the proof.
\end{proof}

Theorem \ref{thm-LSym3} and Theorem \ref{thm-LSym4} follow from Theorem \ref{thm3} and Theorem \ref{thm4}, respectively, by applying Lemma \ref{key}:

\begin{thm}\label{thm-LSym3}
For any fixed $j$,  let $T_j$ be the Hecke operators given by
\begin{align*}
T_j:=&\sum_{n=0}^{\lfloor 3j/2\rfloor}\Big(\Bigl\lfloor \frac{n}{2}\Bigr\rfloor-\Bigl\lfloor\frac{n-1}{3}\Bigr\rfloor\Big)q^{n}\one_{3j-2n}*\one_{3j-n, 3j-n}\\
&-\sum_{n=j+1}^{\lfloor 3j/2\rfloor}\Big(\Bigr
\lfloor \frac{n-j-1}{2} \Bigr\rfloor+1\Big)q^{n}\one_{3j-2n}*\one_{3j-n, 3j-n}.
\end{align*}
Then one has that
\begin{align*}
\mathbb{L}(\Sym^3)=\sum_{j=0}^{\infty}T_jq^{-3j/2}.
\end{align*}
\end{thm}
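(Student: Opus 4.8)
The plan is to simply assemble the result from the ingredients already in hand: the plethysm expansion, the explicit multiplicity formula of Theorem \ref{thm3}, and the inverse Satake computation of Lemma \ref{key}. First I would start from the rewritten form of the basic function in \eqref{LSymk-simple} specialized to $k=3$, namely
\begin{align*}
\mathbb{L}(\Sym^3)=\sum_{j=0}^{\infty}\sum_{n=0}^{\lfloor 3j/2\rfloor}N(j, 3, n)\,\mathcal{S}^{-1}\big(\Sym^{3j-2n}(t_1,t_2)\otimes (t_1t_2)^{3j-n}\big).
\end{align*}
Then I would apply Lemma \ref{key} with $m=3j-2n$ and $i=3j-n$ to each inner summand, which turns $\mathcal{S}^{-1}\big(\Sym^{3j-2n}(t_1,t_2)\otimes (t_1t_2)^{3j-n}\big)$ into $q^{-(3j-2n)/2}\one_{3j-2n}*\one_{3j-n,3j-n}$. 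Pulling out the factor $q^{-3j/2}$ common to the $j$th term and leaving $q^{n}$ inside gives
\begin{align*}
\mathbb{L}(\Sym^3)=\sum_{j=0}^{\infty}q^{-3j/2}\sum_{n=0}^{\lfloor 3j/2\rfloor}N(j, 3, n)\,q^{n}\,\one_{3j-2n}*\one_{3j-n,3j-n}.
\end{align*}

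Next I would substitute the explicit value of $N(j,3,n)$ from Theorem \ref{thm3}. Here one must be careful about the two regimes: for $0\le n\le j$ one has $N(j,3,n)=\lfloor n/2\rfloor-\lfloor(n-1)/3\rfloor$, while for $j<n\le \lfloor 3j/2\rfloor$ there is the extra correction term $-\lfloor(n-j-1)/2\rfloor-1$. I would therefore split the inner sum at $n=j$, write the whole sum with the ``main'' term $\lfloor n/2\rfloor-\lfloor(n-1)/3\rfloor$ running over all $0\le n\le \lfloor 3j/2\rfloor$, and then subtract the correction term $\lfloor(n-j-1)/2\rfloor+1$ over the range $j+1\le n\le \lfloor 3j/2\rfloor$ only. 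This is exactly the definition of $T_j$ in the statement, so after this substitution the identity $\mathbb{L}(\Sym^3)=\sum_{j\ge 0}T_j q^{-3j/2}$ reads off immediately.

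A couple of small checks should be folded in to make the argument airtight. One should verify the boundary/degenerate cases: when $j=0$ the inner sum is just the single term $n=0$, and the conventions $N(j,3,0)=1$, $\one_0=\one_K$ (the identity of the Hecke algebra) make $T_0=\one_K$, consistent with the $j=0$ term of $\mathbb{L}(\Sym^3)$ being $\mathcal{S}^{-1}(1)=\one_K$; similarly for small $j$ the upper limit $\lfloor 3j/2\rfloor$ may fall below $j+1$, in which case the correction sum is empty and only the first sum survives, again matching $T_j$. One should also note that $3j-2n\ge 0$ on the range $0\le n\le\lfloor 3j/2\rfloor$ so that $\one_{3j-2n}$ is defined, and that $3j-n$ is a nonnegative integer there, so $\one_{3j-n,3j-n}$ makes sense (when $3j-2n=0$ the factor $\one_{0}$ is the unit and one is simply left with $\one_{3j-n,3j-n}$, exactly as the formula for $T_j$ records). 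The only genuinely substantive input is Lemma \ref{key}, which has already been proved, and the correctness of the case-split for $N(j,3,n)$, which is Theorem \ref{thm3}; the rest is bookkeeping. Accordingly, the ``hard part'' here is not conceptual at all—it is merely keeping the index ranges straight across the two regimes of Theorem \ref{thm3} and making sure the exponent of $q$ is distributed correctly between the outer $q^{-3j/2}$ and the inner $q^{n}$, so that the resulting expression is literally the displayed $T_j$.
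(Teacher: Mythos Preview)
Your proposal is correct and follows essentially the same route as the paper's own proof: start from \eqref{LSymk-simple} with $k=3$, apply Lemma \ref{key} to convert the inverse Satake transform into $q^{-3j/2+n}\one_{3j-2n}*\one_{3j-n,3j-n}$, then split the inner sum at $n=j$ and substitute the two formulas from Theorem \ref{thm3}, recombining so that the ``main'' piece runs over the full range while the correction runs only over $j+1\le n\le \lfloor 3j/2\rfloor$. Your additional boundary checks (the $j=0$ term, the case $3j-2n=0$, and the possibility that the correction sum is empty for small $j$) go slightly beyond what the paper makes explicit, but they are consistent with it and only add rigor.
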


\begin{proof}
From \eqref{induction}, we know that
\begin{align}
\mathcal{S}^{-1}\big(\Sym^{3j-2n}(t_1,t_2)\otimes (t_1t_2)^{3j-n}\big)=q^{-3j/2+n}\one_{3j-2n}*\one_{3j-n, 3j-n}.
\end{align}
Combining the explicit multiplicity formula for $N(j, 3, n)$ computed as in \eqref{N3a} and \eqref{N3b}, the inner sum in \eqref{LSymk-simple} for $k=3$ can be written as
\begin{align*}
\sum_{n=0}^{\lfloor 3j/2\rfloor}&N(j, 3, n)\mathcal{S}^{-1}\big(\Sym^{3j-2n}(t_1,t_2)\otimes (t_1t_2)^{3j-n}\big)\\
=&\sum_{n=0}^j N(j, 3, n)q^{-3j/2+n}\one_{3j-2n}*\one_{3j-n, 3j-n}+\sum_{n=j+1}^{\lfloor 3j/2\rfloor}N(j, 3, n)q^{-3j/2+n}\one_{3j-2n}*\one_{3j-n, 3j-n}\\
=&q^{-3j/2}\sum_{n=0}^j\big(\lfloor \tfrac{n}{2}\rfloor-\lfloor\tfrac{n-1}{3}\rfloor\big)q^n\one_{3j-2n}*\one_{3j-n, 3j-n}\\&+q^{-3j/2}\sum_{n=j+1}^{\lfloor 3j/2\rfloor}\big(\lfloor \tfrac{n}{2}\rfloor-\lfloor\tfrac{n-1}{3}\rfloor-\lfloor \tfrac{n-j-1}{2} \rfloor-1\big)q^n\one_{3j-2n}*\one_{3j-n, 3j-n}\\
=&q^{-3j/2}\left(\sum_{n=0}^{\lfloor 3j/2\rfloor}\big(\lfloor \tfrac{n}{2}\rfloor-\lfloor\tfrac{n-1}{3}\rfloor\big)q^n\one_{3j-2n}*\one_{3j-n, 3j-n}-\sum_{n=j+1}^{\lfloor 3j/2\rfloor}\big(\lfloor \tfrac{n-j-1}{2} \rfloor+1\big)q^n\one_{3j-2n}*\one_{3j-n, 3j-n}\right)
\end{align*}which is $q^{-3j/2}T_j$ by the definition of $T_j$. Therefore we have that
\begin{align}
\mathbb{L}(\Sym^3)=\sum_{j=0}^{\infty}\sum_{n=0}^{\lfloor 3j/2\rfloor}N(j, 3, n)\mathcal{S}^{-1}\big(\Sym^{3j-2n}(t_1,t_2)\otimes (t_1t_2)^{3j-n}\big)=\sum_{j=0}^{\infty}T_jq^{-3j/2}.
\end{align}
\end{proof}

In the similar manner one obtains the following result for $\mathbb{L}(\Sym^4)$ by substituting the values in Corollary \ref{cor-thm4a}.

\begin{thm}\label{thm-LSym4}
For any fixed $j$, let $S_j$ be the Hecke operator given by
\begin{align*}
S_j:=&
\sum_{n=0}^{2j}q^n
\sum_{s=0}^{\lfloor \frac{n}{4} \rfloor} \left(\Bigl\lfloor \frac{n-4s}{2}\Bigr\rfloor - \Bigl\lfloor \frac{n-4s-1}{3}\Bigr\rfloor\right)\one_{4j-2n}*\one_{4j-n, 4j-n}\\
&-\sum_{n=j+1}^{2j}q^n\sum_{s=0}^{n-j-1}\left(\Bigl\lfloor \frac{s}{2}\Bigr\rfloor-\Bigl\lfloor\frac{s-1}{3}\Bigr\rfloor\right)\one_{4j-2n}*\one_{4j-n, 4j-n}.
\end{align*}Then one has that
\begin{align*}
\mathbb{L}(\Sym^4)=\sum_{j=0}^{\infty}S_jq^{-2j}.
\end{align*}
\end{thm}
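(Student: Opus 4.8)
The plan is to combine the three ingredients already established in the paper: the plethysm-based expansion \eqref{LSymk-simple} of $\mathbb{L}(\Sym^4)$ as a double sum over $j$ and $n$ with coefficients $N(j,4,n)$, the inverse Satake formula of Lemma \ref{key}, and the explicit evaluation of $N(j,4,n)$ from Corollary \ref{cor-thm4a}. The target is purely a matter of substituting these into one another and rearranging, so the proof is essentially a bookkeeping computation; the only genuine care needed is in tracking the ranges of summation and the powers of $q$.

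First I would specialize \eqref{LSymk-simple} to $k=4$, giving
\[
\mathbb{L}(\Sym^4)=\sum_{j=0}^{\infty}\sum_{n=0}^{2j}N(j,4,n)\,\mathcal{S}^{-1}\bigl(\Sym^{4j-2n}(t_1,t_2)\otimes (t_1t_2)^{4j-n}\bigr).
\]
Then apply Lemma \ref{key} with $m=4j-2n$ and $i=4j-n$: since $m/2=2j-n$, we get $\mathcal{S}^{-1}\bigl(\Sym^{4j-2n}(t_1,t_2)\otimes(t_1t_2)^{4j-n}\bigr)=q^{-(2j-n)}\one_{4j-2n}*\one_{4j-n,4j-n}=q^{n-2j}\one_{4j-2n}*\one_{4j-n,4j-n}$. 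Factoring out $q^{-2j}$ from the inner sum over $n$, the expression becomes $\sum_{j\ge 0}q^{-2j}\sum_{n=0}^{2j}N(j,4,n)\,q^n\,\one_{4j-2n}*\one_{4j-n,4j-n}$.

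Next I would insert the explicit formulas for $N(j,4,n)$ from Corollary \ref{cor-thm4a}, splitting the inner sum at $n=j$. For $0\le n\le j$ one substitutes \eqref{N4a-3a}, contributing $\sum_{n=0}^{j}q^n\bigl(\sum_{s=0}^{\lfloor n/4\rfloor}(\lfloor\frac{n-4s}{2}\rfloor-\lfloor\frac{n-4s-1}{3}\rfloor)\bigr)\one_{4j-2n}*\one_{4j-n,4j-n}$; for $j<n\le 2j$ one substitutes \eqref{N4b-3b}, whose first term again gives $\sum_{s=0}^{\lfloor n/4\rfloor}(\lfloor\frac{n-4s}{2}\rfloor-\lfloor\frac{n-4s-1}{3}\rfloor)$ and whose second term gives the correction $-\sum_{s=0}^{n-j-1}(\lfloor\frac{s}{2}\rfloor-\lfloor\frac{s-1}{3}\rfloor)$. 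The point I would emphasize — exactly as in the proof of Theorem \ref{thm-LSym3} — is that the ``main term'' $\sum_{s=0}^{\lfloor n/4\rfloor}(\lfloor\frac{n-4s}{2}\rfloor-\lfloor\frac{n-4s-1}{3}\rfloor)$ has the same shape on both pieces $0\le n\le j$ and $j<n\le 2j$, so the two pieces recombine into a single sum $\sum_{n=0}^{2j}$, while the correction term $-\sum_{s=0}^{n-j-1}(\cdots)$ survives only on the range $j+1\le n\le 2j$. This recombination is precisely the definition of $S_j$, and dividing by the already-extracted $q^{-2j}$ identifies the inner double sum with $q^{-2j}S_j$.

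The only real obstacle — and it is minor — is verifying that the index ranges line up so that the two halves of $N(j,4,n)$ genuinely merge into the single unrestricted sum $\sum_{n=0}^{2j}q^n\sum_{s=0}^{\lfloor n/4\rfloor}(\cdots)$ appearing in $S_j$; this uses that \eqref{N4a-3a} and the leading term of \eqref{N4b-3b} are literally the same expression, together with the observation $N(j,4,n)=N(n,4,n)$ interpretation implicit in Theorem \ref{thm4}. Once that is checked, summing over $j$ yields $\mathbb{L}(\Sym^4)=\sum_{j=0}^{\infty}S_j q^{-2j}$, completing the proof.
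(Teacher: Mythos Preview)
Your proposal is correct and follows essentially the same route as the paper: specialize \eqref{LSymk-simple} to $k=4$, apply Lemma~\ref{key} to produce the factor $q^{-2j+n}\one_{4j-2n}*\one_{4j-n,4j-n}$, split the inner sum at $n=j$, substitute the two formulas of Corollary~\ref{cor-thm4a}, and then merge the common ``main term'' into a single sum over $0\le n\le 2j$ while the correction term survives only for $j+1\le n\le 2j$, yielding $S_j$. The paper's proof is terser but identical in substance; your remark invoking $N(j,4,n)=N(n,4,n)$ is unnecessary, since the recombination is immediate from the fact that \eqref{N4a-3a} and the first sum in \eqref{N4b-3b} are literally the same expression.
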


\begin{proof}
Again \eqref{induction} gives that
\begin{align}
\mathcal{S}^{-1}\big(\Sym^{4j-2n}(t_1,t_2)\otimes (t_1t_2)^{4j-n}\big)=q^{-2j+n}\one_{4j-2n}*\one_{4j-n, 4j-n}.
\end{align}
For $k=4$, the inner sum in \eqref{LSymk-simple} can be written as
\begin{align*}
\sum_{n=0}^{2j}&N(j, 4, n)\mathcal{S}^{-1}\big(\Sym^{4j-2n}(t_1,t_2)\otimes (t_1t_2)^{4j-n}\big)\\
=&\sum_{n=0}^j N(j, 4, n)q^{-2j+n}\one_{4j-2n}*\one_{4j-n, 4j-n}+\sum_{n=j+1}^{2j}N(j, 4, n)q^{-2j+n}\one_{4j-2n}*\one_{4j-n, 4j-n}\\
=&q^{-2j}S_j,
\end{align*}where in the last equality we used \eqref{N4a-3a} and \eqref{N4b-3b} together with the definition of $S_j$.  The desired result
\begin{align}
\mathbb{L}(\Sym^4)=\sum_{j=0}^{\infty}\sum_{n=0}^{2j}N(j, 4, n)\mathcal{S}^{-1}\big(\Sym^{4j-2n}(t_1,t_2)\otimes (t_1t_2)^{4j-n}\big)=\sum_{j=0}^{\infty}S_jq^{-2j}
\end{align}then follows.
\end{proof}

Before closing, it is worth pointing out that, in his recent paper \cite{Gu}, Guerreiro constructed an explicit inversion formula for the $p$-adic Whittaker transform on $\GL_2(\QQ_p)$ and use it to obtain an integral representations of the local $L$-factors 
$$
L_p(s, \pi, \Sym^k)
$$
for $L(s, \pi,\Sym^k)$ associated to an irreducible automorphic representation $\pi$ of $\GL_2(\A_\QQ)$ for the case $k=3$ and $k=4$. Here as usual $\A_\QQ$ is the ring of adel\'es of $\QQ$.

Our results in this section however provide an alternative way to invert the Satake transform explicitly. The approach used in this paper is heavily relied on combinatorial methods which are, in fact, quite different from that of computing the residues of the local $L$-factors to study the inverse Satake transform.


\end{document}